\def\qed{\hfill \mbox{\rule{0.5em}{0.5em}}}
\newtheorem{theorem}{Theorem}[section]
\newtheorem{corollary}{Corollary}[section]
\newtheorem{proposition}{Proposition}[section]
\newtheorem{example}{Example}[section]
\newtheorem{remark}{Remark}[section]
\begin{document}


\title[\resizebox{4.5in}{!}{Asymptotic results on weakly increasing subsequences in random words}]{Asymptotic results on weakly increasing subsequences in random words}

\author{\"{U}m\.{i}t I\c{s}lak}

\address{%
Bo\u{g}azi\c{c}i University\\
Faculty of Arts and Science \\
Department of Mathematics\\
Bebek-Istanbul, Turkey, 34342}

\email{umit.islak1@boun.edu.tr}

\author{Alperen Y. \"{O}zdemir}
\address{%
University of Southern California\\
Department of Mathematics\\
Los Angeles, California, 90089-2532}
\email{aozdemir@usc.edu}
\subjclass{60F05, 62E20}

\keywords{Weakly increasing subsequences, Random Words, Random Permutations, Central Limit Theorem, Moment asymptotics}

\date{March 03, 2018}

\begin{abstract} 
Let $X=(X_1,\ldots,X_n)$ be a vector of i.i.d. random variables where
$X_i$'s take values over $\mathbb{N}$. The purpose of this paper is to study the number of weakly
increasing subsequences of $X$ of a given length $k$, and the number of all weakly
increasing subsequences of $X$. For the former, it is
shown that a central limit theorem holds. Also, the first two
moments of each of those two random variables are analyzed, their asymptotics are investigated, and
results are related to the case of similar statistics in uniformly random permutations. We conclude the paper with 
applications on a similarity measure of Steele, and on increasing subsequences of  riffle shuffles.
\end{abstract}

\maketitle

\section{Introduction}\label{sec:intro} 

Let $X_1,X_2,\ldots,$ be a sequence of independent and identically distributed (i.i.d.) random variables whose support is a finite subset of $\mathbb{N}:=\{1,2,\ldots\}$, and set  $p_j = \mathbb{P}(X_1 = j)$, $j \in
\mathbb{N}$. Throughout the paper, we assume that the probability measure $\{p_j\}$ is non-degenerate. The purpose of this work is to
study two statistics related to the sequence $X_1,X_2,\ldots$, the first one being
\begin{equation*} \label{def:Yk}
Y_{n,k}^{\mathbf{(p)}}= \sum_{1 \leq i_1 < \cdots < i_k \leq n} \mathbf{1}
(X_{i_1} \leq \cdots \leq X_{i_k}).
\end{equation*}
In words, $Y_{n,k}^{\mathbf{(p)}}$ is the number of weakly increasing
subsequences of $X_1,\ldots,X_n$ that have length $k$.

The second statistic of interest, which is closely related
to the former, is the total number of weakly increasing
subsequences of $X_1,\ldots,X_n$. That is defined by
\begin{equation*}\label{def:Y}
    Y_n^{\mathbf{(p)}} = \sum_{k = 0}^n Y_{n,k}^{\mathbf{(p)}}.
\end{equation*} 
Here and below, we include the empty subsequence (corresponding to
$k=0$) merely for ease in some computations. One case of special interest is  the case of  uniform random words, where $p_j=1/a$ for $j \in [a]:=\{1,\ldots,a\}$, $a \in
\mathbb{N}$.  

Certain aspects of the number of increasing subsequences problem in
a uniformly random permutation setting are well studied. See \cite{fulman2},
\cite{LP1981} and \cite{pinsky}. In particular, the work by
Lifschitz and Pittel \cite{LP1981} establish the asymptotic  order of the first
two moments of total number of increasing subsequences in a uniformly
random permutation $\pi$. Namely, letting $Z_{n,k}$ be the number of increasing subsequences of $\pi$ of length $k$, and defining   $$Z_n = \sum_{k = 0}^n Z_{n,k}$$ to be the total number of
increasing subsequences in $\pi$, they show
that the first two moments are given by  
\begin{equation*}\label{eqn:EZn}\mathbb{E}[Z_n] =
\sum_{k=0}^n \frac{1}{k!} \binom{n}{k}
\end{equation*} and
\begin{equation*}\label{eqn:VarZn}\mathbb{E}[(Z_n)^2] = \sum_{k+l \leq n} 4^l ((k+l)!)^{-1} \binom{n}{k+l} \binom{(k+1)/2+l-1}{l}.
\end{equation*}
Moreover, they prove that the asymptotic relations
$$\mathbb{E}[Z_n] \sim (2 \sqrt{\pi e} )^{-1} n^{-1/4}
\exp(2n^{1/2}),$$ and
$$\mathbb{E}[(Z_n)^2] \sim cn^{-1/4} \exp\left(2 \sqrt{2 + \sqrt{5}}
n^{1/2}\right)$$ hold as  $n \rightarrow \infty$, where $c \approx 0.0106$. More recently, Pinsky \cite{pinsky} shows that the weak law of large numbers
$$\frac{Z_{n,k}}{\mathbb{E}[Z_{n,k}]} \longrightarrow_\mathbb{P} 1, \quad \text{as} \quad n \rightarrow \infty $$
is satisfied by the sequence $Z_{n,k}$ when $k=o(n^{\frac{2}{5}})$. In a follow-up work \cite{pinsky2}, he shows that the weak law of large numbers fails if $k$ is of order larger than $n^{\frac{4}{9}}.$\\

To the best of authors' knowledge, the moments and asymptotics of
$Y_{n,k}^{\mathbf{(p)}}$ and $Y_{n}^{(\mathbf{p})}$ have not been investigated in the literature
except the special case $Y_{n,2}^{(a)}$ (corresponding to the number of
inversions in random words) for which it is known that a central
limit theorem holds. See \cite{bliem} and \cite{Janson}, for two
different proofs and several interesting interpretations of
$Y_{n,2}^{(a)}$. Here,   we
  focus on the analogue   results of Lifschitz and Pittel, and 
investigate the asymptotics in a random word setting. The discussions in \cite{Janson} suggests that the statistics we study  below may have connections to other 
topics, such as Ferrer diagrams and generalized Galois numbers.

Also, as we shall see below, random word and random permutation
cases are indeed quite related, the main difference being the
possibility of having repeated values in the former case. However,
as the alphabet size increases, certain statistics related to the random word
case behave more and more like the corresponding statistic of a
uniformly random permutation. Results quantifying such connections
will be the content of Theorem \ref{discrepgen} below.

The theoretical results we have are supported with two applications, one on similarity measures on sequences, and the other one on riffle shuffles. In Section \ref{sec:Steele}, we will study a similarity measure first introduced by M. Steele in \cite{Steele} as an alternative to the length of longest common subsequences. Via establishing a relation between common subsequences and increasing subsequences, we will be able to understand Steele's statistic  in the random permutation setting. In Section \ref{sec:riffle},  
we will also show that our results for weakly increasing sequences in random words can  be interpreted in terms of 
increasing subsequences of one other class of random permutations; riffle shuffles. In particular, that will prove that the number of inversions in a possibly biased riffle shuffle is asymptotically normal answering a question of Fulman in \cite{fulman} in a more general setting. This was previously studied in \cite{Islak}, and indeed, it was the motivating and beginning question for our study below. 

Let us now fix some notation for the following sections. From here
on, $Y_{n,k}^{\mathbf{(p)}}$ and $Y_{n}^{(\mathbf{p})}$ denote the number of weakly increasing
subsequences of length $k$ and the total number of weakly increasing
subsequences of a random word $X_1,\ldots,X_n$ where $X_i$'s are i.i.d.
random variables with compactly supported $\mathbf{p} = (p_1,p_2,\ldots)$ such that $p_j = \mathbb{P}(X_1=j)$, $j \in \mathbb{N}$. For the uniform case, these two will be replaced by $Y_{n,k}^{(a)}$ and $Y_n^{(a)}$. We denote
the number of increasing subsequences of length $k$ and the total
number of increasing subsequences of a uniformly random permutation
by $Z_{n,k}$ and $Z_n$, respectively.

Also, $=_d$, $\rightarrow_d$ and
$\rightarrow_{\mathbb{P}}$ are used for equality in distribution,
convergence in distribution and convergence in probability,
respectively. $\mathcal{G}$ denotes a standard normal random
variable, and $C$ is used for constants (which may differ in each
line) that do not depend on any of the parameters. Finally, for two
sequences $a_n, b_n$, we write $a_n \sim b_n$ for $\lim_{n
\rightarrow \infty} a_n / b_n =1$.

The rest of the paper is organized as follows. Section \ref{sec:momentswords} gives exact expressions first two moments of $Y_{n,k}^{(\mathbf{p})}$ and $Y_{n}^{(\mathbf{p})}$. These results are supplemented with a central limit for $Y_{n,k}^{(\mathbf{p})}$ in Section \ref{sec:CLTwords}. Later, we  turn our attention to random permutations, and provide exact expressions for the first moments and a CLT for $Z_{n,k}$ in Section \ref{momentsCLTperms}. Sections \ref{sec:secondmomentinperms} and \ref{sec:momentastmptoticsinwords} are devoted a study of moment asymptotics as $n \rightarrow \infty$ in random permutations and random words, respectively. We compare the behaviors of increasing subsequences in these two structures in Section \ref{sec:comparison}. The paper is concluded in Section \ref{sec:applications} with two connections to a similarity measure of Steele, and to increasing subsequences in riffle shuffles. 

\section{Moments for the random words case}\label{sec:momentswords}

We start by giving exact expressions for the first two moments of  $Y_{n,k}^{\; (\cdot)}$, the number of weakly increasing subsequences of a random word of length $k$. 
 
\begin{theorem}\label{thm:Ykmoment} Suppose $\mathbf{p}$ is supported on $[a].$ \\
(i.) We have 
\begin{equation}\label{EY_kgeneral}
\mathbb{E}[Y_{n,k}^{\mathbf{(p)}}] = \binom{n}{k}
\sum_{\{x_i\} \in S_{a,k}} \left(\prod_{i=1}^a p_i^{x_i}\right),
\end{equation}
where $S_{a,k} = \{(x_1,\ldots,x_a) : x_i \in \mathbb{N} \cup \{0\}, \;
\sum_{i=1}^a x_i = k\},$ the set of non-negative partitions of $k$ into exactly $a$ parts. \\
(ii.)  $\mathbb{E}[Y_{n,k}^{\mathbf{(p)}}]$
is maximized for the uniform distribution over $[a]$ for which  
\begin{multline}\label{EYk}
    \mathbb{E}[Y_{n,k}^{(a)}] = \binom{n}{k} \binom{a+k-1}{a-1} \frac{1}{a^k} \sim \frac{(a+k-1)!}{(a-1)! (k!)^2}
   \left( \frac{n}{a}\right)^k, \\
    \text{as} \quad  n
\rightarrow \infty \quad  \text{for fixed} \; a  \; \text{and} \;  k.
\end{multline}
(iii.) We have
 \begin{multline}\label{VarYkcountablecase}
\mathbb{E}[(Y_{n,k}^{\mathbf{(p)}})^2]=\sum\limits_{t=0}^{k} {n \choose 2k-t}\sum_{\substack{\{(a_1,\ldots,a_{t+1}) : a_i \in
\mathbb{N} \cup \{0\}\} \\ 
\{\lambda^1_i\}, \{\lambda^2_i\} \in S_{t+1, k-t}\quad}} \Big(\prod\limits_{i=1}^{t} p_{a_i}\Big) \prod\limits_{i=1}^{t+1} {\lambda^1_i+\lambda^2_i \choose \lambda^1_i} \\ \times \Big[\sum_{\substack{\{x^1_{ij}\} \in S_{a_i, \lambda^1_i} \\ 
 \{x^2_{ij}\} \in S_{a_i, \lambda^2_i}}} \prod\limits_{j=1}^{a_j} p_j^{x^1_{ij}x^2_{ij} } \Big].
\end{multline}
When $\mathbf{p}$ is the uniform distribution over a finite alphabet $[a]$, 
 \begin{multline}\label{eqn:2ndmomentYk} 
\mathbb{E}[(Y_{n,k}^{(a)})^2]  = \sum\limits_{t=0}^{k}  \sum\limits_{s=0}^{\min(a-1,k-t)} \Bigg[  a^{-(2k-t)} 4^{k-t}  {n \choose 2k-t}   \binom{k-t-s-1/2}{k-t-s} \\ \times \binom{s+(t+1)/2-1}{s}  
 \binom{2k-t-s+a-1}{t+2s} \binom{2k-2t-3s+a-1}{-s+a-1}\Bigg].
\end{multline} 
\end{theorem}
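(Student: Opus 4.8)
The first moment is immediate from linearity of expectation together with the i.i.d. assumption. Writing $Y_{n,k}^{\mathbf{(p)}}$ as a sum of $\binom{n}{k}$ indicators and noting that $\mathbb{P}(X_{i_1}\le\cdots\le X_{i_k})$ does not depend on which indices are chosen, I would reduce (i) to computing $\mathbb{P}(X_1\le\cdots\le X_k)$. Since a weakly increasing value-sequence is determined by the multiset of values it uses, this probability equals $\sum_{\{x_i\}\in S_{a,k}}\prod_{i}p_i^{x_i}=h_k(p_1,\dots,p_a)$, the complete homogeneous symmetric polynomial, and multiplying by $\binom{n}{k}$ gives (i). For the uniform law each monomial is $a^{-k}$ and $|S_{a,k}|=\binom{a+k-1}{a-1}$, yielding the closed form in (ii), while the asymptotic follows from $\binom{n}{k}\sim n^k/k!$. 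The extremal characterisation among probability vectors on $[a]$ I would deduce from the Schur-monotonicity of $h_k$, checking Schur's criterion $(p_i-p_j)(\partial_{p_i}h_k-\partial_{p_j}h_k)\le 0$ (equivalently, a two-coordinate averaging argument), so that the optimum over $\mathbf{p}$ is attained at the symmetric point $p_i=1/a$; alternatively the generating identity $\sum_{k}h_k z^k=\prod_i(1-p_iz)^{-1}=(1-z/a)^{-a}$ pins down the uniform value directly.

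For the second moment I would expand $(Y_{n,k}^{\mathbf{(p)}})^2=\sum_{I,J}\mathbf{1}(\text{subword on }I\text{ w.i.})\,\mathbf{1}(\text{subword on }J\text{ w.i.})$ over ordered pairs of $k$-subsets and take expectations. Because the $X_i$ are i.i.d., each joint probability depends only on the combinatorial type of $(I,J)$: the overlap size $t=|I\cap J|$ and the way the $t$ shared indices interleave with the $k-t$ indices private to $I$ and the $k-t$ private to $J$. The factor $\binom{n}{2k-t}$ chooses the support $I\cup J$, and the $t$ shared positions split the union into $t+1$ gaps; I would record the gap profile by $\lambda^1,\lambda^2\in S_{t+1,k-t}$, the numbers of $I$-private and $J$-private indices in each gap. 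The key structural observation is that, after conditioning on the shared values $a_1\le\cdots\le a_t$, the constraints decouple: within each gap the $I$-private and $J$-private values must each be weakly increasing inside the value-window set by the two adjacent shared values, and these two events are independent since they involve disjoint families of i.i.d.\ variables. Hence the joint probability factorizes over gaps, the interleaving of $I$- and $J$-private indices within a gap is irrelevant to the probability and contributes only the multinomial weight $\prod_i\binom{\lambda^1_i+\lambda^2_i}{\lambda^1_i}$, and assembling the pieces yields (iii).

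Specializing to the uniform alphabet, every admissible value-assignment to the $2k-t$ positions carries weight $a^{-(2k-t)}$, so (iv) becomes a counting problem: count pairs of weakly increasing words with a prescribed gap profile. The per-gap count of weakly increasing sequences of length $\lambda$ in a window of $r$ symbols is the multiset number $\binom{r+\lambda-1}{\lambda}$, and summing the resulting products over the shared values $a_1\le\cdots\le a_t$, equivalently over the window sizes $c_0,\dots,c_t\ge 0$ with $\sum_i c_i=a-1$, turns the gap factorization into a convolution of binomial coefficients. The plan is then to evaluate this convolution by generating functions: each gap contributes a factor whose diagonal extraction produces a central binomial coefficient, and through the identity $4^{m}\binom{m-1/2}{m}=\binom{2m}{m}$ the product over the $t+1$ gaps is governed by $(1-z)^{-(t+1)/2}$, whose coefficients are precisely $\binom{s+(t+1)/2-1}{s}$, so that the auxiliary index $s$ in (iv) is this expansion parameter. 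The two surviving binomials $\binom{2k-t-s+a-1}{t+2s}$ and $\binom{2k-2t-3s+a-1}{a-1-s}$ then arise from resolving the window-size constraint $\sum_i c_i=a-1$ via Vandermonde--Chu. I expect the main obstacle to be exactly this last step: organizing the coupled multi-index sums over $\lambda^1,\lambda^2$ and over the window sizes and applying the hypergeometric (Vandermonde and central-binomial) identities so that they collapse into the stated four-fold product, while correctly tracking the summation ranges, in particular the cutoff $s\le\min(a-1,k-t)$.
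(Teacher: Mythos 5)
Your route coincides with the paper's at every stage. Part (i) via exchangeability of the $\binom{n}{k}$ indicators and the identification of $\mathbb{P}(X_1\le\cdots\le X_k)$ with $h_k(p_1,\dots,p_a)$ is the ``standard combinatorial argument'' the paper skips; the uniform closed form and the $\binom{n}{k}\sim n^k/k!$ asymptotic in (ii) match. For (iii), your decomposition is exactly the paper's: overlap size $t$, the factor $\binom{n}{2k-t}$ choosing $I_1\cup I_2$, gap profiles $\lambda^1,\lambda^2\in S_{t+1,k-t}$, conditioning on the shared values (equivalently on the increments $a_i$ with $\sum_i a_i=a-1$) so that the gaps decouple, the interleaving weight $\prod_i\binom{\lambda^1_i+\lambda^2_i}{\lambda^1_i}$, and the per-gap multiset counts $\binom{a_i+\lambda^1_i}{a_i}\binom{a_i+\lambda^2_i}{a_i}$ (your window count $\binom{r+\lambda-1}{\lambda}$ with $r=a_i+1$). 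However, you stop precisely where the theorem's computational content lies: passing from the triple sum to the closed form \eqref{eqn:2ndmomentYk}. The paper does this by recognizing the per-gap factor as a Carlitz cycle of binomial coefficients of length three, with trivariate generating function $[(1-x-y-z)^2-4xyz]^{-1/2}$, so that the product over the $t+1$ gaps is $[(1-x-y-z)^2-4xyz]^{-(t+1)/2}$; a binomial expansion in the term $4xyz$ then produces the index $s$, the factor $4^{k-t}$, the four binomials, and the cutoff $s\le\min(a-1,k-t)$ (Proposition \ref{propgen1} and the Appendix). Your sketch (diagonal extraction, $(1-z)^{-(t+1)/2}$, Vandermonde--Chu) is in the right spirit but is not executed, and you flag it yourself as the main obstacle; as written, neither \eqref{eqn:2ndmomentYk} nor the general display \eqref{VarYkcountablecase} is actually derived.

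A second, more serious point concerns the extremality claim in (ii). You assert Schur's criterion $(p_i-p_j)(\partial_{p_i}h_k-\partial_{p_j}h_k)\le 0$ without computing it; if you carry the computation out, the sign comes out opposite. From $\sum_{k\ge 0}h_kz^k=\prod_s(1-p_sz)^{-1}=:H(z)$ one gets $(p_i-p_j)(\partial_{p_i}-\partial_{p_j})h_k=(p_i-p_j)^2\,[z^k]\,\bigl(z^2H(z)(1-p_iz)^{-1}(1-p_jz)^{-1}\bigr)\ge 0$, since all the series involved have nonnegative coefficients; thus $h_k$ is Schur-\emph{convex} on the nonnegative orthant, and the uniform law \emph{minimizes} $\mathbb{E}[Y_{n,k}^{(\mathbf{p})}]$ over laws on $[a]$ rather than maximizing it. A sanity check confirms this: under a point mass every $k$-subset is weakly increasing, giving the maximal value $\binom{n}{k}$, and $h_2(1,0)=1>3/4=h_2(1/2,1/2)$. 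You inherit the claim from the statement, and the paper's own verification contains the slip that makes $h_k$ look concave: in the symmetrization over $x_i<x_j$ the monomial $\prod_sp_s^{x_s}$ is left unchanged when $(x_i,x_j)$ is swapped, and the diagonal terms $x_i=x_j$ are discarded. So this step of your plan would fail if executed, and your fallback identity $(1-z/a)^{-a}$ only evaluates the uniform value --- it says nothing about extremality.
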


\begin{proof}
\textbf{(i.)} \eqref{EY_kgeneral} uses a standard combinatorial argument, so is skipped.

\noindent \textbf{(ii.}) The equation in \eqref{EYk}  follows immediately from the first part. The asymptotics for fixed  $a,k$ case is obtained by 
considering only the leading term (in terms of $n$).

To see that $\mathbb{E}[Y_{n,k}^{\mathbf{(p)}}]$ is maximized for the uniform
distribution, we will first show that function $$\psi(\mathbf{p}) = \mathbb{E}[Y_{n,k}^{\mathbf{(p)}}] = \binom{n}{k}
\sum_{(x_1,\ldots,x_a) \in S_{a,k}} \left(\Pi_{i=1}^a p_i^{x_i}\right)$$ is Schur-concave. To do so, we first observe that $\psi$ is symmetric, and so we can use Schur-Ostrowski criterion. That is, we just need to show that $(p_i - p_j ) \left(\frac{\partial \psi}{\partial p_i} - \frac{\partial \psi}{\partial p_j}\right) \leq 0$ for any $i \neq j$. We have 
\begin{align*}
(&p_i - p_j ) \left(\frac{\partial \psi}{\partial p_i} - \frac{\partial \psi}{\partial p_j}\right) \\=& (p_i - p_j ) \binom{n}{k} \sum_{(x_1,\ldots,x_a) \in S_{a,k}} \left(\left(\Pi_{s \neq i} p_s^{x_s} \right) x_i p_i^{x_i -1} - \left(\Pi_{s \neq j} p_s^{x_s} \right) x_j p_j^{x_j -1}\right)  
\\=&  (p_i - p_j ) \binom{n}{k} \sum_{(x_1,\ldots,x_a) \in S_{a,k}} \left(\left(\Pi_{s =1}^n p_s^{x_s} \right) \left(\frac{x_i}{p_i} - \frac{x_j}{p_j}  \right)\right) 
 \\=& (p_i - p_j ) \binom{n}{k} \sum_{(x_1,\ldots,x_a) \in S_{a,k}, x_i < x_j} \left(\left(\Pi_{s =1}^n p_s^{x_s} \right) \left\{\left(\frac{x_i}{p_i} - \frac{x_j}{p_j}  \right) + \left(\frac{x_j}{p_i} - \frac{x_i}{p_j}  \right) \right\}\right)  
\\=& (p_i - p_j ) \binom{n}{k} \sum_{(x_1,\ldots,x_a) \in S_{a,k}, x_i < x_j} \left(\left(\Pi_{s =1}^n p_s^{x_s} \right) \left\{\frac{(p_j - p_i)(x_i + x_j)}{p_i p_j} \right\}\right)  
\\=& -(p_i - p_j )^2 \binom{n}{k} \sum_{(x_1,\ldots,x_a) \in S_{a,k}, x_i < x_j} \left(\left(\Pi_{s =1}^n p_s^{x_s} \right) \left\{\frac{ x_i + x_j}{p_i p_j} \right\}\right)  \leq 0, 
\end{align*}
and so we are done.

\noindent  \textbf{(iii.)}  The proof below is for the uniform distribution over a finite alphabet $[a].$ The result stated in the theorem for the general case can be derived along the same lines.

For a given subset $I$  of $[n]$, let $\chi_I$ be the indicator function of $``\{X_i\}_{i \in I}$ is a weakly increasing subsequence". By this notation,
\begin{equation*}
\mathbb{E}[(Y_{n,k}^{(a)})^2]=\sum\limits_{\substack{{|I_1|=|I_2|=k}}} \mathbb{E}[\chi_{I_1} \times \chi_{I_2}].
\end{equation*}
 where the summation is over all pairs of subsequences of length $k$. 
 
Observe that $\chi_{I_1}$ and $\chi_{I_2}$ are not independent unless $I_1 \cap I_2 = \emptyset.$ So the idea of the first part of the proof is to write the sum over the partitions of $I_1$ and $I_2$, which are partitioned by their intersection, and over the partitions of the alphabet. So that conditioned on specified partitions, we obtain independent random variables in each part.

Let us introduce some notation for the proof. $S_{n,m}$ stands for the  set $\{(x_1,\ldots,x_n) : x_i \in
\mathbb{N} \cup \{0\}, x_1+\cdots+x_n = m\}$ as before. The size of the intersection,  $|I_1 \cap I_2|,$ is denoted by $t$ throughout the proof. So the subsequences excluding the intersection are partitioned into $t+1$ parts. The partitions are denoted by $\{ \lambda_i^1 \}, \{\lambda_i^2 \} \in S_{t+1, k-t}$, where $|I_1-I_1 \cap I_2|=|I_2-I_1 \cap I_2|=k-t$. As for the alphabet, we have $\{a_i\} \in S_{t+1, a-1}.$ In fact, any partition of the alphabet determines a weakly increasing subsequence indexed by $I_1 \cap I_2$. More specifically, if $I_1 \cap I_2 = \{ s_1, \ldots, s_t \}$, then take  $a_i=X_{s_i}-X_{s_{i-1}}$ for $2 \leq i \leq t$ and $a_1=X_{s_1}-1,$ $a_{t+1}=a-X_{s_{t}}.$

First, we write the sum over the size of the intersection as
\begin{equation*}
\mathbb{E}[(Y_{n,k}^{(a)})^2]=\sum\limits_{t=0}^{k}  \sum_{\substack{{|I_1|=|I_2|=k} \\ 
|I_1 \cap I_2|=t}} \mathbb{E}[\chi_{I_1} \times \chi_{I_2}]. 
\end{equation*} 
Then the inner sum, first choosing the position of $I_1 \cup I_2$ in the sequence, is written over the partitions defined above. We count the number of weakly increasing subsequences $I_1$ and $I_2$ confined to a given partition triple, noting that each subsequence pair has probability $a^{-(2k-t)}$ to occur.
\begin{flalign*}
&\sum_{\substack{{|I_1|=|I_2|=k} \\ 
|I_1 \cap I_2|=t}} \mathbb{E}[\chi_{I_1} \times \chi_{I_2}]= && \\ & {n \choose 2k-t} \sum_{\substack{\{a_i\} \in S_{t+1, a-1} \\ 
\{\lambda^1_i\}, \{\lambda^2_i\} \in S_{t+1, k-t}\quad}} a^{-(2k-t)}  \prod\limits_{i=1}^{t+1} {\lambda^1_i+\lambda^2_i \choose \lambda^1_i} {a_i+\lambda^1_i \choose a_i }{a_i+\lambda^2_i\choose a_i}.&&
\end{flalign*}

Before giving a precise explanation for the summand, it might be worth presenting an example on the partition indices of the sum. The partitions we are summing over can be described graphically as in the Example \ref{example} below. Associate each $\{\lambda^1_i, \lambda^2_i,a_i\}$ triple either to a rectangular block (if $a_i \geq 1$) or to a line (if $a_i=0$), then attach them diagonally. The $i^{th}$ part has width $\lambda^1_{i}+\lambda^2_{i}$ and height $a_i$. The sum runs through all such arrangements lying in the rectangle of size $(2k-t) \times (a-1).$

\begin{example} \label{example}

\hspace*{0.5cm}

\hspace*{-0.6cm}
\begin{tikzpicture}[scale=0.90]

\draw[step=0.5cm,color=gray] (-1,-1) grid (11.5,3);
\draw [thick, <->] (-1,3.2) -- (-1,-1) -- (11.7,-1);

\draw [fill=red](-1,-1) circle [radius=0.15];
\draw [fill=red](-0.5,-1) circle [radius=0.15];
\node[draw, rectangle, fill=blue] at (0,-0.5) {};
\draw [fill=red](0.5,-1) circle [radius=0.15];
\node[draw, rectangle, fill=blue] at (1,-0.5) {};
\draw [fill=red](1.4,0) circle [radius=0.08];
\draw [fill=blue](1.55,-0.09) rectangle (1.73,0.09);
\draw [fill=red](2,0.5) circle [radius=0.15];
\node[draw, rectangle, fill=blue] at (2.5,0) {};
\node[draw, rectangle, fill=blue] at (3,0.5) {};
\draw [fill=red](3.5,0.5) circle [radius=0.15];
\draw [fill=red](3.9,0.5) circle [radius=0.08];
\draw [fill=blue](4.05,0.41) rectangle (4.23,0.59);
\node[draw, rectangle, fill=blue] at (4.5,0.5) {};
\draw [fill=red](5,0.5) circle [radius=0.15];
\node[draw, rectangle, fill=blue] at (5.5,0.5) {};
\draw [fill=red](5.9,0.5) circle [radius=0.08];
\draw [fill=blue](6.05,0.41) rectangle (6.23,0.59);
\draw [fill=red](6.5,1.5) circle [radius=0.15];
\node[draw, rectangle, fill=blue] at (7,0.5) {};
\draw [fill=red](7.5,1.5) circle [radius=0.15];
\node[draw, rectangle, fill=blue] at (8,1) {};
\draw [fill=red](8.5,2) circle [radius=0.15];
\node[draw, rectangle, fill=blue] at (9,1) {};
\node[draw, rectangle, fill=blue] at (9.5,1.5) {};
\draw [fill=red](9.9,2) circle [radius=0.08];
\draw [fill=blue](10.05,1.91) rectangle (10.23,2.09);
\draw [fill=red](10.5,2) circle [radius=0.15];
\node[draw, rectangle, fill=blue] at (11,3) {};
\draw [fill=red](11.5,3) circle [radius=0.15];
\draw [violet, very thick] (-1,-1) rectangle (1.5,0);
\draw [violet, very thick] (1.5,0) rectangle (4,0.5);
\draw [violet, very thick] (4,0.5) rectangle (6,0.5);
\draw [violet, very thick] (6,0.5) rectangle (10,2);
\draw [violet, very thick] (10,2) rectangle (11.5,3);

\node [above] at (-1,3.2) {\small alphabet};
\foreach \x in {1,,9}
\node[left] at (-1,.5*\x-1.5) {$\scriptstyle{\x}$ };
\node [right] at (11.7,-1) {$\scriptstyle{{\color{red}{I_1}} \cup {\color{blue}{I_2}}}$};
\foreach \x in {1,...,26}
\node[below] at (.5*\x-1.5,-1.05) {$\scriptstyle{i_{\x}}$ };
\end{tikzpicture}

\centerline{$k=15, t=4, a=9, \quad \quad {\color{red}{I_1}} \cap {\color{blue}{I_2}} =\{i_6, i_{11}, i_{16}, i_{24}\}$,}  
$\{a_i\}_{i=1}^{t+1}=(2,1,0,3,2), \, \{\lambda^1_i\}_{i=1}^{t+1}=(3,2,1,3,2), \, \{\lambda^2_i\}_{i=1}^{t+1}=(2,2,2,4,1)$.

\bigskip

The disks and the squares represent two different subsequences. An intersection point is denoted by a small disk and small square juxtaposed. The horizontal axis shows the indices in the sequence (for simplicity the size of the sequence is the size of the union of the two subsequences) and the vertical axis is for the values from the alphabet.

The figure represents one possible allocation in the sum over the partitions above. The union of two subsequences is partitioned by their intersection, and each part corresponds to a $\{\lambda_i^1, \lambda_i^2, a_i\}$ index of the product in the formula above.  
\end{example}

There are ${\lambda^1_i+\lambda^2_i \choose \lambda^1_i}$ positions for the elements of $I_1$ and $I_2$ that are in the $i^{th}$ part. Then, for a given $a_i$, we count how many possibilities there are to have two increasing subsequences within the same part. Because the distribution over the alphabet is uniform, the probability of each of them is $a^{-(2k-t)}.$ Finally, we have

\begin{flalign}\label{sum over part}
&\mathbb{E}[(Y_{n,k}^{(a)})^2] =&& \notag\\ &\sum\limits_{t=0}^{k} {n \choose 2k-t} a^{-(2k-t)} \mkern-20mu \sum_{\substack{\{a_i\} \in S_{t+1, a-1} \\ 
\{\lambda^1_i\}, \{\lambda^2_i\} \in S_{t+1, k-t}\quad}} \prod\limits_{i=1}^{t+1} {\lambda^1_i+\lambda^2_i \choose \lambda^1_i} {a_i+\lambda^1_i \choose a_i }{a_i+\lambda^2_i\choose a_i}.&& \notag \\
\end{flalign} 

The equation for the general case, where the distribution is arbitrary over the alphabet (See Theorem \ref{thm:totalnumberofWIS}), can be obtained similarly. 

Now in order to evaluate the sum that runs through the partitions of $k-t$ and $a-1$ in equation \ref{sum over part} above, we consider it as the coefficient of term $x^{k-t}y^{k-t}z^{a-1}$ for the generating function of
\begin{equation*}
 \prod\limits_{i=1}^{t+1} \sum\limits_{\lambda^1_i,\lambda^2_i,a_i \geq 0} {\lambda^1_i+\lambda^2_i \choose \lambda^1_i} {a_i+\lambda^1_i \choose a_i }{a_i+\lambda^2_i\choose a_i}x^{\lambda^1_i} y^{\lambda^2_i} z^{a_i}.
\end{equation*}
The notation $[x^n]$ attached before a sum denotes \textit{the coefficient of $x^n$ in the sum}. The following proposition, which is proved in the Appendix \ref{pfgen1}, concludes the proof. 
\qed
\begin{proposition} \label{propgen1} 

\begin{gather*}
[x^{k-t}y^{k-t}z^{a-1}]\prod\limits_{i=1}^{t+1} \sum\limits_{\lambda^1_i,\lambda^2_i,a_i \geq 0} {\lambda^1_i+\lambda^2_i \choose \lambda^1_i} {a_i+\lambda^1_i \choose a_i }{a_i+\lambda^2_i\choose a_i}x^{\lambda^1_i} y^{\lambda^2_i} z^{a_i} =\\
 4^{k-t} \sum\limits_{s=0}^{\min(a-1,k-t)} \Bigg[ 
 \binom{k-t-s-1/2}{k-t-s} \binom{s+(t+1)/2-1}{s} \\
\times  \binom{2k-t-s+a-1}{t+2s} \binom{2k-2t-3s+a-1}{-s+a-1}\Bigg].
\end{gather*}

\end{proposition}

The following theorem summarizes our results on the total number of weakly increasing
subsequences in a random word.

\begin{theorem}\label{thm:totalnumberofWIS}
(i.) We have $$\mathbb{E}[Y_{n}^{(\mathbf{p})}] = \sum_{k=0}^n \binom{n}{k}
\sum_{(x_1,\ldots,x_a) \in S_{n,k}} \left(\Pi_{i=1}^a
p_i^{x_i}\right),$$ where $S_{n,k} = \{(x_1,\ldots,x_n) : x_i \in
\mathbb{N} \cup \{0\}, x_1+\cdots+x_a = k\}$. This expression is
maximized for the uniform distribution over $[a]$ for which we have
$\mathbb{E}[Y_n^{(a)}] = \sum_{k=0}^n \binom{n}{k} \binom{a+k-1}{a-1}
\frac{1}{a^k}$. When $a=2$, this simplifies to
$$\mathbb{E}[Y_n^{(2)}] = \left(\frac{3}{2}\right)^{n-1} \left(\frac{n+3}{2}\right).$$
For fixed $a \geq 2$, one has the asymptotic
$$\mathbb{E}[Y_n^{(a)}] \sim \frac{1}{(a-1)! a^{a-1}} n^{a-1} \left(1+\frac{1}{a} \right)^{n-a+1}, \quad \text{as} \quad n \rightarrow \infty.$$

\noindent (ii.) For the uniform case, we have the following upper and lower
bounds for the second moment:
$$\mathbb{E}[(Y_n^{(a)})^2] \leq \frac{n!}{(a-1)!}\sum_{0 \leq k_1 + k_2 \leq n} 2^{k_1} \frac{(a+k_1-1)!(a+k_2-1)!}{(k_1!)^2(k_2!)^2 (n-k_1-k_2)! a^{k_1+k_2}}$$
and
$$\mathbb{E}[(Y_n^{(a)})^2] \geq \frac{n!}{(a-1)!}\sum_{0 \leq k_1 + k_2 \leq n} \frac{(a+k_1-1)!(a+k_2-1)!}{(k_1!)^2(k_2!)^2 (n-k_1-k_2)! a^{k_1+k_2}}.$$
%
%
When $a=2$, these bounds simplify to
\begin{gather*}
\sum_{0 \leq k_1 + k_2 \leq n} \frac{k_2+1}{2^{k_1 + k_2}}  \binom{n}{k_1,k_2,n-k_1-k_2} \\ \leq \mathbb{E}[(Y_n^{(2)})^2]
\leq \sum_{0 \leq k_1 + k_2 \leq n} \frac{k_2+1}{2^{k_2}} \binom{n}{k_1,k_2,n-k_1-k_2}
\end{gather*}

\end{theorem}

\begin{remark}Note that the upper and lower bounds for $a = 2$ case differ only by the $2^{k_1}$ 
term.\end{remark}

\begin{proof} \textbf{(i.)} The first
expression follows from the expectation of $Y_{n,k}^{\mathbf{(p)}}$ in Theorem
\ref{thm:Ykmoment}.

When $a=2$, we have $$\mathbb{E}[Y_{n}^{(2)}] = \sum_{k=0}^n
\binom{n}{k} (k+1) \frac{1}{2^k} = \sum_{k=1}^n \binom{n}{k}
\frac{k}{2^k} + \sum_{k=0}^n \binom{n}{k} \frac{1}{2^k}.$$ The
formula for this special case now follows from some elementary
manipulations and by using the binomial theorem.

For the asymptotics, first note that $$\mathbb{E}[Y_n^{(a)}] =
\sum_{k=0}^n \binom{n}{k} \frac{(k+a-1)!}{k! (a-1)!}
\frac{1}{a^k}.$$ So,
\begin{align*}
  \mathbb{E}[Y_n^{(a)}] &\sim \frac{1}{(a-1)!} \sum_{k=0}^n \frac{n!}{k! (n-k)!} k^a \frac{1}{a^k} \\
    &\sim \frac{1}{(a-1)!} \sum_{k=a-1}^n \frac{n!}{(k-a+1)! (n-k)!} \frac{1}{a^k}  \\
    &= n(n-1)\ldots(n-a+2) \frac{1}{(a-1)!} \sum_{k=a-1}^n \frac{(n-a+1)!}{(k-a+1)! (n-k)!}
    \frac{1}{a^k} \\
    &\sim \frac{n^{a-1}}{(a-1)!} \sum_{k=0}^{n-a+1}
    \binom{n-a+1}{k} \frac{1}{a^{k+a-1}} \\
    &\sim \frac{n^{a-1}}{(a-1)!} \frac{1}{a^{a-1}} \left(1+\frac{1}{a}
    \right)^{n-a+1},
\end{align*}
where we used the fact that $a$ is fixed,  and also the binomial
theorem for the last step.

\noindent \textbf{(ii.)} Let us begin with the upper bound. First note that
\begin{align*}
   \mathbb{E}[(Y_n^{(a)})^2]  =& \mathbb{E}\left[\sum_{j,k=0}^n  Y_{n,j}^{(a)} Y_{n,k}^{(a)} \right] \\
  \begin{split}
  =& \mathbb{E}\Big[\sum_{j,k=0}^n \sum_{1 \leq i_1 < \cdots < i_j \leq n}  \sum_{1 \leq r_1 < \cdots < r_k \leq n}
   \Big(\mathbf{1}(X_{i_1} \leq \cdots \leq X_{i_j})\\
    &\times \mathbf{1}(X_{r_1} \leq \cdots \leq X_{r_k}) \Big)\Big]
  \end{split}\\
   &= \sum_{S_1,S_2} \mathbb{E}[\chi_{S_1} \chi_{S_2}]
\end{align*}
where the summation $\Sigma_{S_1,S_2}$ is over all subsets $S_1,S_2$
of $[n]$ and where $\chi_{S_1}$ and $\chi_{S_2}$ are the indicators
that $X_1,\ldots,X_n$ reduced to $S_1$ and $S_2$ are weakly increasing,
respectively.
So we have
\begin{equation*}
 \mathbb{E}[(Y_n^{(a)})^2] = \sum_{S_1,S_2} \mathbb{E}[\chi_{S_1} \chi_{S_2}]
   \leq \sum_{S_1,S_2} \mathbb{E}[\chi_{S_1} \chi_{S_2 - S_1}]
    = \sum_{S_1,S_2} \mathbb{E}[\chi_{S_1}] \mathbb{E}[\chi_{S_2 -
    S_1}],
\end{equation*}
where we used the independence of $(X_i)_{i \in S_1}$ and $(X_j)_{j
\in S_2 - S_1}$ for the last step. Thus,
$$\mathbb{E}[(Y_n^{(a)})^2] \leq \sum_{k_1 + k_2 \leq n} \sum_{(S_1,S_2)}\binom{a+k_1-1}{a-1}\binom{a+k_2-1}{a-1} \frac{1}{a^{k_1+k_2}},$$
where the inner summation is taken over ordered sets of subsets
$S_1, S_2$ such that $|S_1| = k_1$ and $|S_2 - S_1| = k_2$.

Noting that the last observation is equivalent to
$$\mathbb{E}[(Y_n^{(a)})^2] \leq\sum_{k_1 + k_2 \leq n} \binom{a+k_1-1}{a-1}\binom{a+k_2-1}{a-1} \frac{1}{a^{k_1+k_2}} \sum_{(S_1,S_2)}
1,$$ we should next estimate $\sum_{(S_1,S_2)} 1$. We have
\begin{equation*}
  \sum_{(S_1,S_2)} 1 \leq \binom{n}{k_1}   2^{k_1} \binom{n-k_1}{k_2}.
\end{equation*}
Hence, combining these observations we arrive at
\begin{align*}
 \mathbb{E}[(Y_n^{(a)})^2] &\leq& \sum_{\mathclap{0 \leq k_1  + k_2 \leq n}} 2^{k_1} \binom{a+k_1-1}{a-1}\binom{a+k_2-1}{a-1} \binom{n}{k_1} \binom{n-k_1}{k_2} \frac{1}{a^{k_1+k_2}} \\
   &=& \frac{n!}{((a-1)!)^2}\sum_{0 \leq k_1 + k_2 \leq n} 2^{k_1} \frac{(a+k_1-1)!(a+k_2-1)!}{(k_1!)^2(k_2!)^2 (n-k_1-k_2)!
   a^{k_1+k_2}},
\end{align*}
where the last step requires some elementary manipulations. This
concludes the proof of the upper bound.

For the binary case, the bound we derived simplifies as
\begin{align*}
  \mathbb{E}[(Y_n^2)^{(2)}]  \leq& \sum_{0 \leq k_1 + k_2 \leq n} \frac{(k_1+1)!(k_2+1)!}{k_1! k_2!} \frac{2^{k_1}}{2^{k_1+k_2}} \binom{n}{k_1} \binom{n-k_1}{k_2}\\
    =& \sum_{0 \leq k_1 + k_2 \leq n} (k_1+1)(k_2+1) \frac{1}{2^{k_2}} \binom{n}{k_1} \binom{n-k_1}{k_2}  \\
   =& \sum_{k_2 = 0}^{n} \frac{k_2+1}{2^{k_2}} \sum_{k_1=0}^{n-k_2} (k_1+1)
   \binom{n}{k_1,k_2,n-k_1-k_2}.
\end{align*}

Next, let us focus on the lower bound. First observe that
\begin{equation*}
  \mathbb{E}[(Y_n^{(a)})^2] \geq \sum_{\overline{(S_1,S_2)}} \mathbb{E}[\chi_{S_1} \chi_{S_2}] = \sum_{\overline{(S_1,S_2)}} \mathbb{E}[\chi_{S_1}]
    \mathbb{E}[\chi_{S_2}],
\end{equation*}
where $\overline{(S_1,S_2)}$ is so that $S_1,S_2 \subset [n]$ and
$S_1 \cap S_2 = \emptyset$. Then
\begin{align*}
  \mathbb{E}[(Y_n^{(a)})^2] \geq& \sum_{k_2 = 0}^n \sum_{k_1 =0}^{n-k_2}
  \binom{a + k_2 -1}{a-1} \binom{a + k_1 -1}{a-1} \binom{n}{k_2}
  \binom{n-k_2}{k_1} \frac{1}{a^{k_1+k_2}} \\
  =& \frac{n!}{((a-1)!)^2}\sum_{0 \leq k_1 + k_2 \leq n} \frac{(a+k_1-1)!(a+k_2-1)!}{(k_1!)^2(k_2!)^2 (n-k_1-k_2)!
  a^{k_1+k_2}}.
\end{align*}
This simplifies to
$$ \mathbb{E}[(Y_n^{(2)})^2] \geq \sum_{0 \leq k_1 + k_2 \leq n} \frac{k_2+1}{2^{k_1 + k_2}}  \binom{n}{k_1,k_2,n-k_1-k_2}
$$ for the binary case.  \hfill 
\end{proof}

\section{Central limit theorem for random words}\label{sec:CLTwords}

\begin{theorem}Let $d_K$ denote the Kolmogorov distance, and $\mathcal{G}$ denote the standard Gaussian random variable. \label{thm:CLTwords}
 We have
$$d_K\left(\frac{Y_{n,k}^{\mathbf{(p)}} - \mathbb{E}[Y_{n,k}^{\mathbf{(p)}}]}{\beta_{n}},
\mathcal{G} \right)  \leq \frac{C}{\sqrt{n}},$$ where $\beta_n \sim
\sqrt{Var(Y_{n,k}^{\mathbf{(p)}})}$ as $n \rightarrow \infty$, $\mathbb{E}[Y_{n,k}^{\mathbf{(p)}}]$ is given by \eqref{EYk} and $Var(Y_{n,k}^{\mathbf{(p)}}) = \mathbb{E} [(Y_{n,k}^{\mathbf{(p)}})^2] - (\mathbb{E}[Y_{n,k}^{\mathbf{(p)}}])^2$ with $\mathbb{E} [(Y_{n,k}^{\mathbf{(p)}})^2]$ as in \eqref{VarYkcountablecase}. $C$ is constant with respect to $n$, but depends on $k$ and $\mathbf{p}.$ \\
In particular, $(Y_{n,k}^{\mathbf{(p)}} -
\mathbb{E}[Y_{n,k}^{\mathbf{(p)}}])/\sqrt{Var(Y_{n,k}^{\mathbf{(p)}})}$ converges in
distribution to $\mathcal{G}$.
\end{theorem}

Before giving the proof, let us  discuss a special case of Theorem
\ref{thm:CLTwords}. To begin with, for a given sequence of real numbers
$\mathbf{x}=(x_1,\ldots,x_n)$, the \emph{number of inversions} in
$\mathbf{x}$ is defined by
$$Inv(\mathbf{x})=\#\{(i,j) : 1 \leq i \leq j \leq n, x_i> x_j\}.$$
Number of inversions (and other related descent statistics) is a standard tool
in nonparametric statistics to check the randomness of a given word
or permutation. Using Theorem \ref{thm:CLTwords} with $k=2$
(and simplifying the expectation and variance formulas) reveals the
asymptotic normality of the number of inversions in a random word as
a corollary. This was previously studied  by Bliem/Kousidis
\cite{bliem} and Janson \cite{Janson}.

\begin{corollary}\label{cor:inversionsinwords} (\cite{bliem}, \cite{Janson})
Let $X=(X_1,\ldots,X_n)$ be a random vector where $X_1,\ldots,X_n$ are
independent random variables that are uniformly distributed over
$[a]$. Then we have
$$\frac{Inv(X) - \frac{a-1}{a} \frac{n(n-1)}{4}}{\sqrt{\frac{a^2-1}{a^2} \frac{n(n-1)(2n+5)}{72}}} \longrightarrow_d \mathcal{G}, \qquad \text{as} \; \; n\rightarrow \infty.$$ 
\end{corollary}
Corollary \ref{cor:inversionsinwords} provides a central limit
theorem for the number of inversions  in riffle shuffles as well. This will be further explored in Section
\ref{sec:riffle}. Moreover, the technique used in proof of Theorem \ref{thm:CLTwords} will also help us to  show that
$Z_{n,k}$ satisfies a central limit theorem after the `natural'
centering and scaling (in contrast with $Z_{n}$). We were
not able to find a proof for this result in literature except for the special case $k=2$. See \cite{fulman2} and \cite{pike}.

\vspace{0.1in}

The proof of Theorem \ref{thm:CLTwords} will make use of a result of \cite{chen} on asymptotic normality of $U-$statistics. We provide some necessary background on U-statistics. First,  for a real valued symmetric function $g : \mathbb{R}^m \rightarrow \mathbb{R}$ and for a random sample
$X_1,\ldots,X_n$ with $n \geq m$, \emph{a U-statistic with kernel} $g$
is defined by $$U_n=U_n(g) = \frac{1}{\binom{n}{m}} \sum_{C_{m,n}}
g(X_{i_1},\ldots,X_{i_m})$$ where the summation is over the set
$C_{m,n}$ of all $\binom{n}{m}$ combinations of $m$ integers, $i_1 <
i_2<\cdots<i_m$ chosen from $\{1,\ldots,n\}.$  We also set 
$g_1(X_1):=\mathbb{E}[g(X_1,\ldots,X_m)|X_1]$.

\begin{theorem}\label{conv} \cite{chen}
Let $X_1,...,X_n$ be i.i.d. random variables, $U_n$ be a U-statistic
with symmetric kernel $g$, $\mathbb{E}[g(X_1,...,X_m)]=0,
\sigma^2=Var(g(X_1,...,X_m))<\infty$ and
$\sigma_1^2=Var(g_1(X_1))>0.$  If in addition
$\mathbb{E}|g_1(X_1)|^3< \infty,$ then $$d_K \left(\frac{\sqrt{n}}{m
\sigma_1}U_n,Z \right)  \leq \frac{6.1
\mathbb{E}|g_1(X_1)|^3}{\sqrt{n} \sigma_1^3}
+\frac{(1+\sqrt{2})(m-1)\sigma}{(m(n-m+1))^{1/2} \sigma_1}.
$$
\end{theorem}

\noindent \textbf{Proof of Theorem \ref{thm:CLTwords}.} Let
$X_1,\ldots,X_n$ be i.i.d with distribution $\mathbb{P}(X_i = j) =
p_j$, $j \in [a]$. Also let $U_1,\ldots,U_n$ be independent (also
independent of $X_i$'s) random variables uniform over $(0,1)$. Let
$\sigma$ be a random permutation in $S_n$ so that
\begin{equation*}
    U_{\sigma(1)}<\cdots< U_{\sigma(n)}.
\end{equation*}
Then, clearly, $$(X_1,\ldots,X_n)=_d
(X_{\sigma(1)},\ldots,X_{\sigma(n)}).$$ For $1 \leq k \leq n$, set
$$\mathcal{S}_1=\{(i_1,\ldots,i_k): i_j \in [n], j=1,\ldots,k \; \text{and} \; 1\leq i_1<\cdots<i_k \leq n\},$$ and
$$\mathcal{S}_2=\{(i_1,\ldots,i_k): i_j \in [n], j=1,\ldots,k \; \text{and} \; i_1,\ldots,i_k \; \text{are distinct}\}.$$
Next, observe that
\begin{align*}
  Y_{n,k}^{\mathbf{(p)}} =& \sum_{(i_1,\ldots,i_k) \in \mathcal{S}_1} \mathbf{1}(X_{i_1} \leq X_{i_2} \leq \ldots \leq X_{i_k})\\
    =_d& \sum_{(i_1,\ldots,i_k) \in \mathcal{S}_1} \mathbf{1}(X_{\sigma(i_1)} \leq X_{\sigma(i_2)} \leq \ldots \leq X_{\sigma(i_k)}) \\
    =& \sum_{(i_1,\ldots,i_k) \in \mathcal{S}_2} \mathbf{1}(X_{\sigma(i_1)} \leq X_{\sigma(i_2)} \leq \ldots \leq X_{\sigma(i_k)}, i_1 < \ldots <i_k) \\
    =& \sum_{(i_1,\ldots,i_k) \in \mathcal{S}_2} \mathbf{1}(X_{\sigma(i_1)} \leq X_{\sigma(i_2)} \leq \ldots \leq X_{\sigma(i_k)}, U_{\sigma(i_1)} < \ldots
    <U_{\sigma(i_k)}) \\
    =_d& \sum_{(i_1,\ldots,i_k) \in \mathcal{S}_2} \mathbf{1} (X_{i_1} \leq X_{i_2} \leq \ldots \leq X_{i_k}, U_{i_1} < \ldots <
    U_{i_k}).
\end{align*}
Now, define functions $f$ and $g$ by setting 
$$\binom{n}{2}f((x_{i_1},u_{i_1}),(x_{i_2},u_{i_2}),\ldots,(x_{i_k},u_{i_k}))=\mathbf{1}(x_{i_1} \leq \ldots \leq x_{i_k}, u_{i_1} < \ldots < u_{i_k}),$$  and
\begin{gather*}g((x_{i_1},u_{i_1}),(x_{i_2},u_{i_2}),\ldots,(x_{i_k},u_{i_k})) \\ = \sum_{(j_1,\ldots,j_k) \in \mathcal{S}_{i_1,\ldots,i_k}}f((x_{j_1},u_{j_1}),(x_{j_2},u_{j_2}),\ldots,(x_{j_k},u_{j_k})),
\end{gather*}
where $S_{i_1,\ldots,i_k}$ is the set of all permutations of
$i_1,\ldots,i_k$. Then, we can express $Y_{n,k}^{\mathbf{(p)}}$in terms of $g$
as
$$Y_{n,k}^{\mathbf{(p)}}=\frac{1}{\binom{n}{k}}\sum_{(i_1,\ldots,i_k) \in \mathcal{S}_1}g((X_{i_1},U_{i_1}),(X_{i_2},U_{i_2}),\ldots,(X_{i_k},U_{i_k})).$$
This reveals that $Y_{n,k}^{\mathbf{(p)}}$ is indeed a $U$-statistics since
(i.) $g$ is symmetric,
(ii.) $g$ is a function of random vectors whose coordinates are
  independent,
(iii.) $g \in L^2$.
Result now follows by noting the standard asymptotic formula in
$U$-statistics theory \cite{Lee}, $$\frac{m \sigma_1}{\sqrt{n}} \sim
\sqrt{Var(Y_{n,k}^{\mathbf{(p)}})},$$ and by Theorem \ref{conv}. 
\end{proof}

\section{Moments and a central limit theorem for random permutations}\label{momentsCLTperms}

\begin{theorem}\label{thm:CLTZnk}
We have
$$\frac{Z_{n,k} - \mathbb{E}[Z_{n,k}] }{\sqrt{Var(Z_{n,k})}}
\longrightarrow_d \mathcal{G}, \qquad \text{as} \; \; n\rightarrow \infty,$$  where
$$\mathbb{E}[Z_{n,k}]= \binom{n}{k} \frac{1}{k!},$$ and $Var(Z_{n,k}) = \mathbb{E}[Z_{n,k}^2] - (\mathbb{E}[Z_{n,k}])^2$, as
\begin{equation}
\begin{gathered}\label{eqn:2ndmomentZk}
\mathbb{E}[Z_{n,k}^2] = \sum_{t+s \leq k} \Bigg[ [(2 k - t)!]^{-1} 4^{k-t} \binom{n}{2k-t}\binom{k-t-s-1/2}{k-t-s}\\ \times \binom{s+(t+1)/2-1}{s} \binom{2k-t}{2k-2t-2s}\Bigg].
\end{gathered}
\end{equation}
\end{theorem}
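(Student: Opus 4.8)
The plan is to treat the mean, the central limit theorem, and the second moment separately, reusing the random-word machinery wherever possible. For the mean I would argue directly: writing $Z_{n,k}=\sum_{1\le i_1<\cdots<i_k\le n}\mathbf{1}(\pi(i_1)<\cdots<\pi(i_k))$, each fixed $k$-subset of positions is increasing with probability $1/k!$ by symmetry of the uniform permutation, so linearity gives $\mathbb{E}[Z_{n,k}]=\binom{n}{k}/k!$. For the central limit theorem I would transcribe the proof of Theorem \ref{thm:CLTwords} almost verbatim. Realize the uniform permutation through i.i.d.\ continuous ranks $V_1,\dots,V_n$, so that $Z_{n,k}=\sum_{i_1<\cdots<i_k}\mathbf{1}(V_{i_1}<\cdots<V_{i_k})$, and introduce an independent family of i.i.d.\ uniforms $U_1,\dots,U_n$ to randomize the position order exactly as in the word case. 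Exchangeability gives $Z_{n,k}=_d\sum_{(i_1,\dots,i_k)\in\mathcal{S}_2}\mathbf{1}(V_{i_1}<\cdots<V_{i_k},\,U_{i_1}<\cdots<U_{i_k})$ over ordered distinct tuples, which is symmetric in the pairs $(V_i,U_i)$; hence $Z_{n,k}=_d\binom{n}{k}\,U_n$ for the $U$-statistic $U_n$ with bounded symmetric kernel $g((v_1,u_1),\dots,(v_k,u_k))=\sum_{\tau\in S_k}\mathbf{1}(v_{\tau(1)}<\cdots<v_{\tau(k)},\,u_{\tau(1)}<\cdots<u_{\tau(k)})$ of order $m=k$.

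I then apply Theorem \ref{conv} to $U_n$ (after centering $g$). Since $0\le g\le k!$ is bounded, the hypotheses $\sigma^2<\infty$ and $\mathbb{E}|g_1|^3<\infty$ are automatic, so the only substantive point is the non-degeneracy $\sigma_1^2=Var(g_1)>0$; for $k\ge 2$ this holds because $g_1$ is genuinely non-constant (for instance $g_1(v,u)=(1-v)(1-u)+vu$ when $k=2$). The bound of Theorem \ref{conv} is then $O(n^{-1/2})$, and since $Z_{n,k}$ is an affine function of $U_n$ this upgrades to the asserted convergence to $\mathcal{G}$ after the natural normalization.

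For the second moment I would obtain \eqref{eqn:2ndmomentZk} by passing to the limit $a\to\infty$ in the random-word formula \eqref{eqn:2ndmomentYk}. Coupling a uniform word over $[a]$ to continuous ranks via $X_i^{(a)}=\lceil aV_i\rceil$, the letters are almost surely distinct for all large $a$ and, restricted to distinctness, induce exactly the uniform permutation generated by the $V_i$; hence $Y_{n,k}^{(a)}\to Z_{n,k}$ almost surely along this coupling, and since $0\le Y_{n,k}^{(a)}\le\binom{n}{k}$, bounded convergence yields $\mathbb{E}[(Y_{n,k}^{(a)})^2]\to\mathbb{E}[Z_{n,k}^2]$. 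It then remains to take the termwise limit of \eqref{eqn:2ndmomentYk}: for $a\ge k+1$ the summation range $\{0\le s\le k-t\}$ already coincides with that of \eqref{eqn:2ndmomentZk}, the factors $4^{k-t}\binom{n}{2k-t}\binom{k-t-s-1/2}{k-t-s}\binom{s+(t+1)/2-1}{s}$ are identical, and, rewriting $\binom{2k-2t-3s+a-1}{-s+a-1}=\binom{2k-2t-3s+a-1}{2k-2t-2s}$, the remaining alphabet-dependent factor is a product of two polynomials in $a$ of degrees $t+2s$ and $2k-2t-2s$, so $\lim_{a\to\infty}a^{-(2k-t)}\binom{2k-t-s+a-1}{t+2s}\binom{2k-2t-3s+a-1}{-s+a-1}=\frac{1}{(t+2s)!\,(2k-2t-2s)!}=\frac{1}{(2k-t)!}\binom{2k-t}{2k-2t-2s}$, which is precisely the permutation factor. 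Interchanging the finite sum with the limit then gives \eqref{eqn:2ndmomentZk}. Alternatively one can rederive it directly, repeating the decomposition of Theorem \ref{thm:Ykmoment}(iii) and Example \ref{example} with the finite alphabet replaced by a continuum: choose the $\binom{n}{2k-t}$ positions of $I_1\cup I_2$, stratify by $t=|I_1\cap I_2|$ and by the distributions $\{\lambda^1_i\},\{\lambda^2_i\}\in S_{t+1,k-t}$ of the private indices into the $t+1$ blocks cut out by the common chain, replace the counting probability $a^{-(2k-t)}$ by $1/(2k-t)!$ together with the linear-extension count of the two overlapping increasing chains, and evaluate it by the same coefficient extraction as in Proposition \ref{propgen1}.

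The main obstacle is the second-moment identity. In the limiting route it is the combination of the coupling argument with the termwise binomial asymptotics and the interchange of limit and summation; in the direct route it is correctly counting the linear extensions of the two overlapping chains and packaging them into the generating function of Proposition \ref{propgen1}. By contrast the mean is immediate, and the central limit theorem is a near-verbatim adaptation of Theorem \ref{thm:CLTwords} whose only real content is the non-degeneracy $\sigma_1^2>0$.
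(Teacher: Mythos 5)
Your proposal is correct, and for the mean and the CLT it is essentially the paper's own argument: the paper likewise realizes the uniform permutation through ranks of i.i.d.\ uniforms (R\'enyi), introduces the auxiliary uniforms to symmetrize exactly as in Theorem \ref{thm:CLTwords}, and invokes Theorem \ref{conv} plus Slutsky; you are in fact more careful than the paper, which leaves the non-degeneracy $\sigma_1^2>0$ implicit (your check is right, and note $k\ge 2$ is tacitly needed since $Z_{n,1}=n$ is deterministic). Where you genuinely diverge is the second-moment identity \eqref{eqn:2ndmomentZk}. The paper proves it directly: it decomposes $\mathbb{E}[Z_{n,k}^2]$ over the intersection size $t=|I_1\cap I_2|$ and partition pairs $\{\lambda^1_i\},\{\lambda^2_i\}\in S_{t+1,k-t}$, obtaining the intermediate identity \eqref{sum over part2} with per-block weight $\binom{\lambda^1_i+\lambda^2_i}{\lambda^1_i}^2$ and overall probability $[(2k-t)!]^{-1}$, and then extracts the coefficient via the two-variable Carlitz generating function $[(1-x-y)^2-4xy]^{-1/2}$ of Proposition \ref{propgen2} --- so in your sketched alternative direct route the relevant tool is Proposition \ref{propgen2}, not Proposition \ref{propgen1}, a harmless slip in a sketch. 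Your primary route instead passes to the limit $a\to\infty$ in the word formula \eqref{eqn:2ndmomentYk} through the coupling $X_i^{(a)}=\lceil aV_i\rceil$ (under which $Y_{n,k}^{(a)}=Z_{n,k}$ eventually, almost surely, and bounded convergence applies), together with the termwise limit
\begin{equation*}
\lim_{a\to\infty} a^{-(2k-t)}\binom{2k-t-s+a-1}{t+2s}\binom{2k-2t-3s+a-1}{-s+a-1}
=\frac{1}{(t+2s)!\,(2k-2t-2s)!}=\frac{1}{(2k-t)!}\binom{2k-t}{2k-2t-2s},
\end{equation*}
which checks out since the two binomials are polynomials in $a$ of degrees $t+2s$ and $2k-2t-2s$ summing to $2k-t$; for $a\ge k+1$ the ranges coincide and the sums are finite, so the interchange is immediate, and there is no circularity because Theorem \ref{thm:Ykmoment}(iii) is established independently and earlier. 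As for what each approach buys: your limiting argument recycles the already-proved (and harder) word identity and the words-degenerate-to-permutations philosophy of Theorem \ref{discrepgen}(iii), avoiding both a second combinatorial count and a second generating-function evaluation; the paper's direct derivation is self-contained, does not presuppose \eqref{eqn:2ndmomentYk}, and produces the cleaner intermediate form \eqref{sum over part2}, at the cost of repeating the linear-extension counting and proving Proposition \ref{propgen2}.
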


\begin{remark}\label{rmk:Zk} For the special case $k=2$, the proof given here provides an
alternative for the approaches of \cite{fulman2} and \cite{pike} on the
number of inversions in uniformly random permutations. However, the
technique  of the cited papers (exchangeable pairs of Stein's method)
are in a certain sense more general and they also apply to
generalized descents of random permutations. 
\end{remark}

\noindent \textbf{Proof of Theorem \ref{thm:CLTZnk}:} The central limit theorem is based on the following simple result which is attributed to R\'enyi: 
 If $\mathbf{U}=(U_1,\ldots,U_n)$ is a random
vector where $U_i$'s are independent $U(0,1)$ random variables, and if  $R_i$ is  rank 
of $U_i$ in $(U_1,\ldots,U_n)$, then $(R_1,\ldots,R_n)$ has the same distribution with a 
uniformly random permutation in $S_n$. So, leaving the computation of first two moments aside for now, 
$$Z_{n,k} =_d \sum_{1 \leq i_1 < \cdots < i_k \leq n} \mathbf{1} (U_{i_1} < \cdots <
U_{i_k}),$$ where $U_1,\ldots, U_n$ are independent random variables
that are uniformly distributed over $(0,1)$, and the result  follows
by following the same steps in the proof of Theorem
\ref{thm:CLTwords} and by a straightforward application of
Slutsky's theorem. 

\vspace{0.15in}

Next, we derive the first two moments of $Z_{n,k}$. The computation for the first moment is straightforward. For the second moment, we follow the same approach we used in derivation of the second moment in  Theorem   \ref{thm:Ykmoment}. 
We define $\chi_I$ to be the indicator function that the subsequence indexed by   $I \subseteq [n]$ is increasing. Therefore,
\begin{equation*}
\mathbb{E}[Z_{n,k}^2]=\sum\limits_{\substack{{|I_1|=|I_2|=k}}} \mathbb{E}[\chi_{I_1} \times \chi_{I_2}].
\end{equation*}
 where the summation is over all pairs of subsequences of length $k$.  
The idea is the same with the random word case. We consider the sum over the partitions of $I_1$ and $I_2$, which are partitioned by their intersection. So let $t$ denote the size of $I_1 \cap I_2,$ and $\{ \lambda_i^1 \}, \{\lambda_i^2 \} \in S_{t+1, k-t}$ be partitions of $k-t,$ where $|I_1-I_1 \cap I_2|=|I_2-I_1 \cap I_2|=k-t.$  So that conditioned on specified partitions, we obtain independent random variables in each part as before.

First write the sum as 

\begin{equation*}
\mathbb{E}[Z_{n,k}^2] = \sum\limits_{t=0}^{k}  {n \choose 2k-t}\sum_{\substack{{|I_1|=|I_2|=k} \\ 
|I_1 \cap I_2|=t}}\mathbb{E}[\chi_{I_1} \times \chi_{I_2}] \\
\end{equation*}

Next we count the number of increasing subsequences $I_1$ and $I_2$ for given partitions. For each $(\lambda^1_i, \lambda^2_i)$ pair, there are ${\lambda^1_i+\lambda^2_i \choose \lambda^1_i}$ positions for the elements of $I_1$ and $I_2$ in the $i^{th}$ part. Then we choose $\lambda^1_i+\lambda^2_i$ between the random variables indexed by $I_1$ and the random variables indexed by $I_2.$ So we have one more factor of ${\lambda^1_i+\lambda^2_i \choose \lambda^1_i}.$ Being chosen that way, there is one possibility to put them in increasing order. Finally, we note that the probability of any permutation of $I_1 \cup I_2$ is $ [(2 k - t)!]^{-1}.$ Therefore, 
\begin{equation}\label{sum over part2}
\mathbb{E}[Z_{n,k}^2]=\sum\limits_{t=0}^{k} {n \choose 2k-t}  [(2 k - t)!]^{-1}  \sum_{\substack{
\{\lambda^1_i\}, \{\lambda^2_i\} \in S_{t+1, k-t}\quad}} \prod\limits_{i=1}^{t+1} {\lambda^1_i+\lambda^2_i \choose \lambda^1_i}^2.
\end{equation}

Now we evaluate the sum over the partitions of $k-t$ in  \eqref{sum over part2} as in the proof of Theorem \ref{thm:Ykmoment}. We consider it as the coefficient of  $x^{k-t}y^{k-t}$ for the generating function of
\begin{equation*}
 \prod\limits_{i=1}^{t+1} \sum\limits_{\lambda^1_i,\lambda^2_i \geq 0} {\lambda^1_i+\lambda^2_i \choose \lambda^1_i}^2 
\end{equation*}
The following proposition is proved in the Appendix \ref{pfgen2}, from which the result follows.  \qed
\begin{proposition} \label{propgen2}

\begin{flalign*}
&[x^{k-t}y^{k-t}]\prod\limits_{i=1}^{t+1} \sum\limits_{\lambda^1_i,\lambda^2_i\geq 0} {\lambda^1_i+\lambda^2_i \choose \lambda^1_i}^2 x^{\lambda^1_i} y^{\lambda^2_i} =&&\\ &
4^{k-t}\sum\limits_{s=0}^{k-t} \Bigg[  \binom{k-t-s-1/2}{k-t-s} \binom{s+(t+1)/2-1}{s} \binom{2k-t}{2k-2t-2s}\Bigg].&&
\end{flalign*}

\end{proposition}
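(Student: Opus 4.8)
The plan is to recognize the product as a single bivariate generating function raised to a power, put that function in closed form, and then extract the diagonal coefficient $[x^{k-t}y^{k-t}]$ directly. Since all $t+1$ factors are identical, the left-hand side equals $[x^{k-t}y^{k-t}]F(x,y)^{t+1}$, where
$$F(x,y) := \sum_{m,n \geq 0}\binom{m+n}{m}^2 x^m y^n.$$

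First I would establish the closed form
$$F(x,y) = \frac{1}{\sqrt{(1-x-y)^2 - 4xy}},$$
the key algebraic observation being $1 - 2(x+y) + (x-y)^2 = (1-x-y)^2 - 4xy$. To verify it, write $((1-x-y)^2-4xy)^{-1/2} = (1-x-y)^{-1}\bigl(1 - 4xy(1-x-y)^{-2}\bigr)^{-1/2}$, apply $\tfrac{1}{\sqrt{1-4w}} = \sum_j \binom{2j}{j}w^j$ together with the negative binomial expansion of $(1-x-y)^{-N}$, and read off the coefficient of $x^m y^n$. The resulting sum collapses to $\binom{m+n}{m}^2$ by the Vandermonde identity $\sum_j \binom{m}{j}\binom{n}{j} = \binom{m+n}{m}$; this is the only genuine combinatorial identity required.

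Writing $\alpha = (t+1)/2$ and $u = k-t$, I would then factor $(1-x-y)^{-2\alpha}$ out of $F^{t+1} = ((1-x-y)^2 - 4xy)^{-\alpha}$ and expand the remaining factor as a binomial series, obtaining
$$F^{t+1} = \sum_{m \geq 0}\binom{\alpha + m - 1}{m}\, 4^m\, x^m y^m\, (1-x-y)^{-2\alpha - 2m}.$$
Extracting $[x^u y^u]$ forces the factor $(1-x-y)^{-2\alpha-2m}$ to supply $x^{u-m}y^{u-m}$, whose coefficient is $\binom{2\alpha + 2u - 1}{2u-2m}\binom{2u-2m}{u-m}$ by the negative binomial expansion, giving
$$[x^u y^u]F^{t+1} = \sum_{m=0}^{u}\binom{\alpha+m-1}{m}\,4^m\,\binom{2\alpha+2u-1}{2u-2m}\binom{2u-2m}{u-m}.$$

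Finally I would match this to the claimed expression. Substituting $\alpha = (t+1)/2$ gives $2\alpha + 2u - 1 = 2k-t$ and $2u-2m = 2k-2t-2m$, while the half-integer identity $\binom{r-1/2}{r} = 4^{-r}\binom{2r}{r}$ with $r = u-m$ converts $4^m\binom{2u-2m}{u-m}$ into $4^u\binom{u-m-1/2}{u-m}$; renaming $m \to s$ reproduces the proposition verbatim. The main obstacle is not any single hard identity but the diagonal coefficient extraction itself: one must keep the negative-binomial indices straight and spot the half-integer binomial conversion, after which everything lines up term by term.
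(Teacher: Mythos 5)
Your proposal is correct and follows essentially the same route as the paper: both identify the left-hand side as $\bigl((1-x-y)^2-4xy\bigr)^{-(t+1)/2}$ via the length-two case of Carlitz's generating function for cycles of binomial coefficients, expand binomially in powers of $4xy$ against negative powers of $(1-x-y)$, extract the diagonal coefficient, and convert to the stated form via the half-integer identity $\binom{r-1/2}{r}=4^{-r}\binom{2r}{r}$. The only difference is that you verify the closed form for $\sum_{m,n}\binom{m+n}{m}^2x^my^n$ from scratch (via the central binomial series and Vandermonde), where the paper simply cites Carlitz, which makes your write-up self-contained but does not change the argument.
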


\section{Second moment asymptotics in random permutations}\label{sec:secondmomentinperms}
The purpose of this section is to study the asymptotic behavior of  $\mathbb{E}[Z_{n,k}^2]$ as $n \rightarrow \infty$.

\begin{theorem} (The asymptotics for the second moment and the variance of $Z_{n,k}$) \label{asZk}\\ \\
(i.) We have
 \begin{equation*}
\mathbb{E}[Z_{n,k}^2]\sim \frac{n^{2k}}{(k!)^4}, \qquad n \rightarrow \infty,
\end{equation*}
where $\mathbb{E}[Z_{n,k}^2]$ is given in \eqref{eqn:2ndmomentZk}. \\
(ii.)  We have
 \begin{equation}\label{VarZk}
\text{Var}(Z_{n,k})\sim \frac{1}{2((2k-1)!)^2} \left[ \binom{4k-2}{2k-1}-2{2k-1 \choose k}^2 \right] n^{2k-1}, \qquad n \rightarrow \infty,
\end{equation}
where the first two moments of $Z_{n,k}$ are given in Theorem \ref{thm:CLTZnk}.
\end{theorem}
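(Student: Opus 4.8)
The plan is to read off the asymptotics directly from the closed form \eqref{eqn:2ndmomentZk}, regarding each summand as a polynomial in $n$ through the factor $\binom{n}{2k-t}$, which has degree $2k-t$. For part (i) the unique summand of degree $2k$ is the one with $t=0$, so $\mathbb{E}[Z_{n,k}^2]\sim c_0\binom{n}{2k}$ with $c_0=\frac{4^k}{(2k)!}\sum_{s=0}^{k}\binom{k-s-1/2}{k-s}\binom{s-1/2}{s}\binom{2k}{2s}$. First I would rewrite the half-integer binomials via $\binom{m-1/2}{m}=4^{-m}\binom{2m}{m}$; after cancellation the inner summand becomes $(2k)!\,\binom{k}{s}^2/(k!)^2$, and Vandermonde's identity $\sum_{s=0}^{k}\binom{k}{s}^2=\binom{2k}{k}$ collapses the $s$-sum to $4^{-k}\binom{2k}{k}^2$. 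This gives $c_0=(2k)!/(k!)^4$, and combined with $\binom{n}{2k}\sim n^{2k}/(2k)!$ it yields $\mathbb{E}[Z_{n,k}^2]\sim n^{2k}/(k!)^4$, proving (i).

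For part (ii) I would write $\mathrm{Var}(Z_{n,k})=\mathbb{E}[Z_{n,k}^2]-\frac{1}{(k!)^2}\binom{n}{k}^2$ and note that both terms are polynomials of degree $2k$. By (i) their $n^{2k}$-coefficients both equal $1/(k!)^4$, so they cancel and the variance has degree $2k-1$; conceptually this is forced because disjoint index pairs ($t=0$) contribute zero covariance, so the leading variance comes from pairs overlapping in one position. Expanding $\binom{n}{m}=\frac{1}{m!}\bigl(n^m-\binom{m}{2}n^{m-1}+O(n^{m-2})\bigr)$, the coefficient of $n^{2k-1}$ in $\mathbb{E}[Z_{n,k}^2]$ gets the subleading term of the $t=0$ summand, namely $-k(2k-1)/(k!)^4$, plus the leading term of the $t=1$ summand, namely $4^{k-1}A_1/((2k-1)!)^2$, where at $t=1$ the middle half-integer binomial is $1$ and $A_1=\sum_{s=0}^{k-1}\binom{k-1-s-1/2}{k-1-s}\binom{2k-1}{2s+1}$. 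The coefficient of $n^{2k-1}$ in $\frac{1}{(k!)^2}\binom{n}{k}^2$ is $-k(k-1)/(k!)^4$. Assembling the three contributions and using the elementary identity $k^2((2k-1)!)^2/(k!)^4=\binom{2k-1}{k}^2$, the coefficient of $n^{2k-1}$ in the variance becomes $\frac{1}{((2k-1)!)^2}\bigl(4^{k-1}A_1-\binom{2k-1}{k}^2\bigr)$.

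It then remains to evaluate $4^{k-1}A_1$ in closed form. After re-indexing $j=k-1-s$ and applying $\binom{m-1/2}{m}=4^{-m}\binom{2m}{m}$ once more, one gets $A_1=\sum_{j=0}^{k-1}4^{-j}\binom{2k-1}{2j}\binom{2j}{j}$, so the task reduces to the identity $\sum_{j}4^{-j}\binom{2k-1}{2j}\binom{2j}{j}=2^{-(2k-1)}\binom{4k-2}{2k-1}$, equivalently $4^{k-1}A_1=\tfrac12\binom{4k-2}{2k-1}$. I would prove this using the integral representation $\binom{2j}{j}=\frac{1}{2\pi}\int_0^{2\pi}(2\cos\theta)^{2j}\,d\theta$: summing over $j$ with $\sum_j\binom{m}{2j}u^{2j}=\tfrac12[(1+u)^m+(1-u)^m]$ at $u=\cos\theta$ turns the left side into $\frac{1}{2\pi}\int_0^{2\pi}(1+\cos\theta)^{2k-1}\,d\theta=\frac{2^{2k-1}}{\pi}\int_0^{\pi}\cos^{4k-2}\phi\,d\phi$, which equals $2^{-(2k-1)}\binom{4k-2}{2k-1}$ by the Wallis evaluation $\int_0^{\pi}\cos^{2m}\phi\,d\phi=\pi\binom{2m}{m}4^{-m}$. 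Substituting back produces exactly $\frac{1}{2((2k-1)!)^2}\bigl[\binom{4k-2}{2k-1}-2\binom{2k-1}{k}^2\bigr]$, which is \eqref{VarZk}.

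The main obstacle is this last step. The cancellation of the $n^{2k}$-terms is automatic once (i) is established, but extracting the $n^{2k-1}$-coefficient forces one to evaluate the non-obvious binomial sum $A_1$ (equivalently $4^{k-1}A_1$) in closed form; among the available routes, the integral/generating-function argument above is the cleanest I know, whereas a purely hypergeometric manipulation tends to be messier and less transparent.
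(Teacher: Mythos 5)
Your proposal is correct and follows the same route as the paper's proof: both arguments treat $\mathbb{E}[Z_{n,k}^2]$ and $(\mathbb{E}[Z_{n,k}])^2$ as polynomials in $n$ of degree $2k$, simplify the $t=0$ term of \eqref{eqn:2ndmomentZk} via $\binom{m-1/2}{m}=4^{-m}\binom{2m}{m}$ and $\sum_{s}\binom{k}{s}^2=\binom{2k}{k}$ to obtain the common leading coefficient $1/(k!)^4$, and then extract the $n^{2k-1}$ coefficient from the subleading part of the $t=0$ term, the leading part of the $t=1$ term, and the expansion of $\frac{1}{(k!)^2}\binom{n}{k}^2$; your three intermediate coefficients $-k(2k-1)/(k!)^4$, $4^{k-1}A_1/((2k-1)!)^2$ and $-k(k-1)/(k!)^4$ agree exactly with the paper's $A_{2k-1}(k)$ and $B_{2k-1}(k)$, as does your final assembly using $k^2((2k-1)!)^2/(k!)^4=\binom{2k-1}{k}^2$. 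The one genuine divergence is at the step you rightly flag as the crux: after your reindexing $j=k-1-s$, the sum $4^{k-1}A_1=\sum_{j}4^{k-1-j}\binom{2k-1}{2j}\binom{2j}{j}$ is precisely the paper's $\sum_{s}4^{s}\binom{2k-1}{2s+1}\binom{2k-2s-2}{k-s-1}$, which the paper evaluates as $\frac{1}{2}\binom{4k-2}{2k-1}$ by quoting the identity \eqref{id2} from \cite{MG} (applied with $r=2k-2$, $m=k-1$; the paper's statement ``taking $r$ to be $2k-1$'' is a small slip), whereas you prove the equivalent identity $\sum_{j}4^{-j}\binom{2k-1}{2j}\binom{2j}{j}=2^{-(2k-1)}\binom{4k-2}{2k-1}$ from scratch via $\binom{2j}{j}=\frac{1}{2\pi}\int_0^{2\pi}(2\cos\theta)^{2j}\,d\theta$, the even-part expansion of $(1\pm\cos\theta)^{2k-1}$, the substitution $1+\cos\theta=2\cos^2(\theta/2)$, and the Wallis evaluation. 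That computation checks out (the $(1-\cos\theta)^{2k-1}$ term contributes equally by $\theta\mapsto\theta+\pi$), so your version buys self-containedness where the paper relies on a literature citation; otherwise the two proofs are structurally identical.
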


\begin{remark}
Observe that in the expansion 
\begin{equation*}
\binom{4k-2}{2k-1} = \sum_{i=0}^{2k-1} \binom{2k-1}{i}^2,
\end{equation*}
the term we subtract in \eqref{VarZk} is the sum of middle terms. So there might an interesting interpretation or an easier proof for the asymptotics of the variance.
\end{remark}

\begin{proof}
Observe that the largest order term is $n^{2k}$ for both $\mathbb{E}[(Z_{n,k})^2]$ and $\mathbb{E}[(Z_{n,k})]^2.$ Express
\begin{align*}
\mathbb{E}[Z_{n,k}^2]&=A_{2k}(k)n^{2k}+A_{2k-1}(k)n^{2k-1}+ \cdots + A_1(k)n+A_0(k),\\
\mathbb{E}[(Z_{n,k})]^2&=B_{2k}(k)n^{2k}+B_{2k-1}(k)n^{2k-1}+ \cdots + B_1(k)n+B_0(k).
\end{align*}

First we will find $A_{2k}(k)$, then show that $A_{2k}(k)=B_{2k}(k).$\\

We observe that $n^{2k}$ appears in the formula of $\mathbb{E}[Z_{n,k}^2]$ given in  \eqref{eqn:2ndmomentZk} only if $t$ is 0, since the only term involving $n$ is the binomial term $\binom{n}{2k-t}.$ The leading term in the formula, which corresponds to $t=0$, is

\begin{equation} \label{zsimp}
\frac{4^k}{(2k)!} \binom{n}{2k} \sum\limits_{s=0}^k \binom{k-s-1/2}{k-s} \binom{s-1/2}{s} \binom{2k}{2k-2s}.
\end{equation} 
Now we use the identities below, which can be found in \cite{gould}, to simplify \eqref{zsimp}.

\begin{equation}\label{id}
\begin{split}
\binom{-1/2}{i} &= (-1)^i \binom{2i}{i} 2^{-2i} \quad \textrm{for all $i \in \mathbb{R}$,} \\
\binom{-r}{i} &= (-1)^i\binom{r+i-1}{i} \quad \quad \textrm{for all $i \in \mathbb{N}$ and $r \in \mathbb{R}$} 
\end{split}
\end{equation}
to obtain
\begin{align}
\frac{4^k}{(2k)!} & \binom{n}{2k} \sum\limits_{s=0}^k \binom{k-s-1/2}{k-s} \binom{s-1/2}{s} \binom{2k}{2k-2s} \\ &= \frac{1}{(2k)!} \binom{n}{2k} \sum_{s=0}^k \binom{2k}{2k-2s} \binom{2k-2s}{k-s} \binom{2s}{s} \notag \\
&= \frac{1}{(2k)!} \binom{n}{2k} \sum_{s=0}^k \binom{2k}{k} \binom{k}{s}^2 \notag\\
&=  \frac{1}{(2k)!} \binom{n}{2k} \binom{2k}{k}^2 \notag \\
&=  \frac{n(n-1)\cdots(n-2k+1)}{(k!)^4}.\label{tis0}
\end{align}
The third equality follows from the fact that $\binom{2k}{k}=\sum_{s=0}^k \binom{k}{s}^2.$ Therefore, by the calculations above, we have $A_{2k}(k)=\frac{1}{(k!)^4}$. The part $(i)$ is proven. \\

For the part $(ii),$ first we find the coefficient $B_{2k}(k)$ in the expansion of $\mathbb{E}[Z_{n,k}]^2.$ We have,
\begin{align}
\mathbb{E}[Z_{n,k}]^2 &= \binom{n}{k}^2 \frac{1}{(k!)^2} \notag \\
&= \frac{n^2(n-1)^2 \cdots (n-k+1)^2}{(k!)^4} \label{B}
\end{align}
It follows from above that $B_{2k}(k)=A_{2k}(k)=\frac{1}{(k!)^4}, $ which implies the variance can be of order $n^{2k-1}$ at most. Next we compare the second coefficients, $A_{2k-1}(k)$ and $B_{2k-1}(k).$\\

In the formula of $\mathbb{E}[Z_{n,k}^2]$, the only terms with $n^{2k-1}$ have $t$ either 0 or 1. We already simplified the sum of terms for which $t=0$ \eqref{tis0}. The terms with $t=1$ in \eqref{eqn:2ndmomentZk} add up to
\begin{align}
 &\frac{1}{(2k-1)!} \binom{n}{2k-1} \sum_{s=0}^{k-1} \binom{k-s-3/2}{k-s-1} \binom{2k-1}{2k-2s-2}  \notag \\
&= \frac{1}{(2k-1)!} \binom{n}{2k-1} \sum_{s=0}^{k-1} 4^s \binom{2k-1}{2k-2s-2} \binom{2k-2s-2}{k-s-1} \notag \\
 &= \frac{1}{(2k-1)!} \binom{n}{2k-1} \sum_{s=0}^{k-1} 4^s \binom{2k-1}{2s+1} \binom{2k-2s-2}{k-s-1}, \label{z2simp}
\end{align}
where the second equality follows from the identities above, \eqref{id}. In order to evaluate the sum we state the following identity, which was proved in \cite{MG}.
\begin{equation} \label{id2}
\sum_{i=0}^m \binom{r+1}{2i+1} \binom{r-2i}{m-i}2^{2i+1} = \binom{2r+2}{2m+1}  \quad \quad \textrm{for all $m \in \mathbb{N}$ and $r \in \mathbb{R}$}. 
\end{equation} 
Applying \eqref{id2} to \eqref{z2simp} by taking $r$ to be $2k-1$ and $m$ to be $k-1$, we obtain
\begin{equation}
\frac{1}{(2k-1)!} \binom{n}{2k-1} \, \frac{1}{2} \, \binom{4k-2}{2k-1}    
\label{tis1}.
 \end{equation} 
So in order to find $A_{2k-1}(k)$, we add the coefficient of $n^{2k-1}$ in \eqref{tis1} to the coefficient of $n^{2k-1}$ in \eqref{tis0}, which is $-\frac{2k^2-k}{(k!)^4}$. Therefore we have,
\begin{equation*}
A_{2k-1}(k)= -\frac{2k^2-k}{(k!)^4} + \frac{1}{2((2k-1)!)^2} \binom{4k-2}{2k-1}.  
\end{equation*}
Next we calculate $B_{2k-1}(k)$ from  \eqref{B} to find the asymptotics of the variance. The coefficient of $n^{2k-1}$ in \eqref{B} is
 \begin{equation*}
B_{2k-1}(k)= -\frac{k^2-k}{(k!)^4}.  
\end{equation*}
Therefore,
\begin{align*}
\text{Var}(Z_{n,k}) &\sim (A_{2k-1}(k) - B_{2k-1}(k)) \, n^{2k-1} \notag \\
&=\left[\frac{1}{2((2k-1)!)^2} \binom{4k-2}{2k-1} - \frac{1}{(k!)^2 ((k-1)!)^2} \right] n^{2k-1} \notag \\
&= \frac{1}{2((2k-1)!)^2}\left[ \binom{4k-2}{2k-1}-2\binom{2k-1}{k}^2 \right]  n^{2k-1}. 
\end{align*}
\end{proof}

\section{Moment asymptotics in random words}\label{sec:momentastmptoticsinwords}
In this section, we focus on random words where the letters are uniformly distributed over a finite alphabet, and study the second moment of the number of weakly increasing subsequences of a given length.

\begin{theorem} (The asymptotics for the second moment and the variance of $Y_{n,k}^{(a)}$) \\ \\
(i.) We have
 \begin{equation*}
\mathbb{E}[(Y_{n,k}^{(a)})^2]\sim \frac{1}{(k!)^2} \frac{1}{a^{2k}} \binom{k+a-1}{k}^2 n^{2k},
\end{equation*}
where $\mathbb{E}[(Y_{n,k}^{(a)})^2]$ is given in \eqref{eqn:2ndmomentYk}.  \\
(ii.) We have  
 \begin{equation*}
\text{Var}(Y_{n,k}^{(a)})\sim C(k,a) \, n^{2k-1}
\end{equation*}
where 
\begin{gather*}
C(k,a)=\frac{1}{a^{2k}}\Bigg[ \frac{a}{(2k-1)!} \left( \sum\limits_{s=0}^{\min(a-1,k-1)} 4^s \frac{(2k+a-s-2)!}{(2s+1)!(a-s-1)!((k-s-1)!)^2} \right) \\ - \frac{1}{((k-1)!)^2} \binom{k+a-1}{k}^2 \Bigg].
\end{gather*}
\end{theorem}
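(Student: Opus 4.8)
The plan is to follow the proof of Theorem \ref{asZk} essentially verbatim, extracting the two top-degree coefficients in $n$ of each side. Writing $\mathbb{E}[(Y_{n,k}^{(a)})^2]=A_{2k}n^{2k}+A_{2k-1}n^{2k-1}+\cdots$ and $(\mathbb{E}[Y_{n,k}^{(a)}])^2=B_{2k}n^{2k}+B_{2k-1}n^{2k-1}+\cdots$, the first observation is that in the exact formula \eqref{eqn:2ndmomentYk} the only $n$-dependent factor is $\binom{n}{2k-t}$, of degree $2k-t$. Hence $n^{2k}$ can arise only from $t=0$, and $n^{2k-1}$ only from $t\in\{0,1\}$; this confines the whole computation to those two values of $t$.

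For part (i) I would read the $t=0$ term off the generating function behind Proposition \ref{propgen1}: when $t=0$ the product $\prod_{i=1}^{t+1}$ has a single factor, so the extraction $[x^{k}y^{k}z^{a-1}]$ is immediate and returns $\binom{2k}{k}\binom{a+k-1}{a-1}^2$ (take $\lambda^1=\lambda^2=k$, $a_1=a-1$). Thus the $t=0$ term equals $a^{-2k}\binom{n}{2k}\binom{2k}{k}\binom{a+k-1}{k}^2$, and with $\binom{n}{2k}\sim n^{2k}/(2k)!$ and $\binom{2k}{k}/(2k)!=1/(k!)^2$ this gives $A_{2k}=\frac{1}{(k!)^2 a^{2k}}\binom{k+a-1}{k}^2$, proving part (i). (Alternatively one simplifies the $s$-sum by \eqref{id} exactly as in the passage leading to \eqref{tis0}.)

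For part (ii), note first that $(\mathbb{E}[Y_{n,k}^{(a)}])^2=\binom{n}{k}^2\binom{a+k-1}{k}^2 a^{-2k}$ has leading coefficient $B_{2k}=\frac{1}{(k!)^2 a^{2k}}\binom{k+a-1}{k}^2=A_{2k}$, so the $n^{2k}$ terms cancel and $\text{Var}(Y_{n,k}^{(a)})\sim(A_{2k-1}-B_{2k-1})n^{2k-1}$. Here $A_{2k-1}$ splits into a $t=0$ part and a $t=1$ part: the former comes from the subleading coefficient $-k(2k-1)/(2k)!$ of $\binom{n}{2k}$, and the latter from $a^{-(2k-1)}\binom{n}{2k-1}$ times the $t=1$ coefficient of Proposition \ref{propgen1}. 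At $t=1$ the product has two factors, so this coefficient is the genuine $s$-sum; applying \eqref{id} through $4^{k-1-s}\binom{k-1-s-1/2}{k-1-s}=\binom{2k-2-2s}{k-1-s}$ and collapsing the ensuing product of three binomials into one ratio of factorials turns it into $\sum_{s}4^s\,\frac{(2k+a-s-2)!}{(2s+1)!(a-s-1)!((k-s-1)!)^2}$. Reading $B_{2k-1}$ off $\binom{n}{k}^2$ gives $n^{2k-1}$-coefficient $-k(k-1)/(k!)^2$, and combining the $t=0$ part with $B_{2k-1}$ via $-k(2k-1)+k(k-1)=-k^2$ and $k^2/(k!)^2=1/((k-1)!)^2$ produces exactly the second bracketed term of $C(k,a)$, while the $t=1$ part supplies the first. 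This yields the stated $C(k,a)$.

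The only real difficulty is computational bookkeeping: the $t=1$ extraction involves two interacting parts rather than one, the triple-binomial-to-factorial collapse must be carried out carefully, and the subleading coefficient of $\binom{n}{2k}$ must be tracked. It is worth emphasizing that, unlike the permutation case in Theorem \ref{asZk}, no summation identity such as \eqref{id2} of \cite{MG} is available here: the alphabet-dependent factors $\binom{2k-t-s+a-1}{t+2s}\binom{2k-2t-3s+a-1}{a-1-s}$ obstruct a closed-form collapse, which is precisely why $C(k,a)$ must be left as an explicit sum over $s$.
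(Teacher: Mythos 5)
Your proposal is correct and follows the paper's own proof essentially step for step: the same expansion of $\mathbb{E}[(Y_{n,k}^{(a)})^2]$ and $(\mathbb{E}[Y_{n,k}^{(a)}])^2$ in powers of $n$, the same restriction to $t=0$ and $t=1$, the same conversion $4^{k-1-s}\binom{k-1-s-1/2}{k-1-s}=\binom{2k-2s-2}{k-s-1}$ via \eqref{id} with the triple-binomial collapse to $\frac{(2k+a-s-2)!}{(2s+1)!(a-s-1)!((k-s-1)!)^2}$ at $t=1$, and the same bookkeeping $-k(2k-1)+k(k-1)=-k^2$, $k^2/(k!)^2=1/((k-1)!)^2$ assembling $C(k,a)$ from $A_{2k-1}-B_{2k-1}$. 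Your one small deviation in part (i) — reading the $t=0$ coefficient directly off the single-factor partition sum (the unique term $\lambda^1=\lambda^2=k$, $a_1=a-1$ giving $\binom{2k}{k}\binom{k+a-1}{a-1}^2$) instead of collapsing the $s$-sum in \eqref{eqn:2ndmomentYk} through the Riordan identity \eqref{id3} — is valid and in fact slightly slicker, and you correctly note the paper's route as the alternative.
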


\begin{proof} 
As in the proof of Theorem \ref{asZk}, we write 
\begin{align*}
\mathbb{E}[(Y_{n,k}^{(a)})^2]&=A_{2k}(k,a)n^{2k}+A_{2k-1}(k,a)n^{2k-1}+ \cdots + A_1(k,a)n+A_0(k,a),\\
\mathbb{E}[(Y_{n,k}^{(a)})]^2&=B_{2k}(k,a)n^{2k}+B_{2k-1}(k,a)n^{2k-1}+ \cdots + B_1(k,a)n+B_0(k,a).
\end{align*}

Again as in the proof of Theorem \ref{asZk}, first we evaluate the sum of terms in the equation for $\mathbb{E}[(Y_{n,k}^{(a)})^2]$, \eqref{eqn:2ndmomentYk}, corresponding to $t=0$ and then to $t=1.$ For $t=0$, we have
\begin{align}
& \frac{4^k}{a^{2k}} \binom{n}{2k} \sum\limits_{s=0}^{\min(a-1,k)} \binom{k-s-1/2}{k-s} \binom{s-1/2}{s} \binom{2k-s+a-1}{2s} \binom{2k-3s+a-1}{-s+a-1} \notag \\ &= \frac{1}{a^{2k}} \binom{n}{2k} \sum\limits_{s=0}^{\min(a-1,k)} \binom{2k-2s}{k-s} \binom{2s}{s} \binom{2k-s+a-1}{2s} \binom{2k-3s+a-1}{-s+a-1}  \notag \\
 &= \frac{1}{a^{2k}} \binom{n}{2k} \sum\limits_{s=0}^{\min(a-1,k)} \frac{(2k+a-1-s)!}{(a-1-s)!((k-s)!)^2(s!)^2}  \notag \\
&=  \frac{1}{a^{2k}} \binom{n}{2k}\binom{2k}{k} \sum\limits_{s=0}^{\min(a-1,k)} \binom{2k+a-1-s}{2k}  \binom{k}{s}^2 \notag \\ 
&=  \frac{1}{a^{2k}} \binom{n}{2k}\binom{2k}{k} \binom{k+a-1}{k}^2,  \label{Tis0}
\end{align}
where the first equality follows from the identities \eqref{id} and the last equality follows from the combinatorial fact \eqref{id3} below in \cite{R}.

\begin{equation} \label{id3}
\sum_{i=0}^{\min(m,l)} \binom{m+2l-i}{2l} \binom{l}{i}^2  = \binom{m+l}{l}^2  \quad \quad \textrm{for all $m,l \in \mathbb{N}$}. 
\end{equation}
Therefore, $A_{2k}(k,a),$ the coefficient of $n^{2k}$ above, is $\frac{1}{(k!)^2} \frac{1}{a^{2k}} \binom{k+a-1}{k}^2 $. The part $(i)$ is proven. \\

Second, we deal with the expansion of $\mathbb{E}[(Y_{n,k}^{(a)})]^2$.  Observe that 
\begin{align*}
\mathbb{E}[(Y_{n,k}^{(a)})]^2&=\binom{n}{k}^2 \binom{k+a-1}{k}^2 \frac{1}{a^{2k}}  \notag \\
&= \binom{k+a-1}{k}^2 \frac{1}{a^{2k}} \frac{n^2(n-1)^2 \cdots (n-k+1)^2}{(k!)^2} 
\end{align*}
So we have,
\begin{align}
B_{2k}(k,a)&=\frac{1}{(k!)^2} \frac{1}{a^{2k}} \binom{k+a-1}{k}^2,  \notag \\
B_{2k-1}(k,a) &= -\frac{k^2-k}{(k!)^2} \binom{k+a-1}{k}^2 \frac{1}{a^{2k}}. \notag 
\end{align}
Finally $A_{2k-1}(k,a)$ is to be found. Following the proof for the random permutation case, we write the sum of terms for $t=1$ in \eqref{eqn:2ndmomentYk},

\medmuskip=0mu
\thinmuskip=0mu
\thickmuskip=0mu

\begin{align*}
&= \frac{4^{k-1}}{a^{2k-1}} \binom{n}{2k-1} \sum\limits_{s=0}^{\min(a-1,k-1)} \binom{k-s-3/2}{k-s-1} \binom{2k+a-s-2}{2s+1}  \binom{2k-3s+a-3}{-s+a-1} \notag \\
&= \frac{1}{a^{2k-1}} \binom{n}{2k-1} \sum\limits_{s=0}^{\min(a-1,k-1)} 4^s \binom{2k+a-s-2}{2s+1} \binom{2k+a-3s-3}{a-s-1} \binom{2k-2s-2}{k-a-s-1} \notag \\
&= \frac{1}{a^{2k-1}} \binom{n}{2k-1} \sum\limits_{s=0}^{\min(a-1,k-1)} 4^s \frac{(2k+a-s-2)!}{(2s+1)!(a-s-1)!((k-s-1)!)^2}  
\end{align*}
\thinmuskip=3mu
\medmuskip=4mu plus 2mu minus 4mu
\thickmuskip=5mu plus 5mu
Adding the coefficient of $n^{2k-1}$ for the terms having $t=0$, which can be found in \eqref{Tis0} similar to previous case, we have
\begin{equation*}
\begin{split}
A_{2k-1}(k,a)= & - \frac{1}{a^{2k}} \frac{2k^2-k}{(k!)^2} \binom{k+a-1}{k}^2 + \frac{1}{a^{2k-1}} \frac{1}{(2k-1)!} \\
 & \times \sum\limits_{s=0}^{\min(a-1,k-1)} 4^s \frac{(2k+a-s-2)!}{(2s+1)!(a-s-1)!((k-s-1)!)^2}
\end{split}
\end{equation*}
Therefore,
\begin{align*}
\text{Var}(Y_{n,k}^{(a)}) &\sim (A_{2k-1}(k,a) - B_{2k-1}(k,a)) \, n^{2k-1}  \\
&= \frac{1}{a^{2k}}\Bigg[\frac{a}{(2k-1)!} \left(\sum\limits_{s=0}^{\min(a-1,k-1)} 4^s \frac{(2k+a-s-2)!}{(2s+1)!(a-s-1)!((k-s-1)!)^2} \right) \\ & \qquad \quad - \frac{\binom{k+a-1}{k}^2}{((k-1)!)^2} \Bigg]  n^{2k-1}&. 
\end{align*}

 \end{proof}

\section{Comparison between words and permutations}\label{sec:comparison}
Next, we explore connections between the
statistics $Y_{n,k}^{\mathbf{(p)}}$ and $Z_{n,k}$ (and,  similarly for $Y_n^{\mathbf{p}}$ and
$Z_n$). Considering the uniform case,  it is intuitively  clear that for large $a$, these statistics
should be close to each other in distribution as the possibility of
having same numbers disappears for the random word case. We
formalize this below by comparing the tail probabilities
corresponding the uniformly random permutation case and random word
case where the letters are not necessarily equally likely.
\begin{theorem}\label{discrepgen} Let $d_{TV}$ denote the total variation distance. \\ 
 (i.)We have
\begin{equation}\label{TVboundforbiasedcase}
    d_{TV}(Y_{n,k}^{\mathbf{(p)}}, Z_{n,k}) \leq \binom{n}{2} \sum_{i=1}^a p_i^2.
\end{equation} When $p$ is the uniform distribution over $[a]$, $a \in \mathbb{N}$,  the bound in
\eqref{TVboundforbiasedcase} can be improved to
\begin{equation*}
    d_{TV}(Y_{n,k}^{\mathbf{(p)}}, Z_{n,k}) \leq 1- \frac{a!}{(a-n)!}
    \frac{1}{a^n}, \qquad a \geq n.
\end{equation*}

\noindent
(ii.) Further, for any $z \in \mathbb{R}$, $a \geq n \geq 1$, we
have
$$\mathbb{P}(Y_{n,k}^{(a)} \geq z) \in \left(\mathbb{P}(Z_{n,k} \geq z),
\mathbb{P}(Z_{n,k} \geq z) + 1- \frac{a!}{(a-n)!}
\frac{1}{a^n}\right].$$

\noindent
(iii.) For fixed $n$ and $k,$ $Y_{n,k}^{(a)} \longrightarrow_d Z_{n,k}$
as $a \rightarrow \infty$.

\noindent
(iv.) The results in (i.), (ii.) and (iii.) also hold when $Y_{n,k}^{\mathbf{(p)}}$ and
$Y_{n,k}^{(a)}$ are replaced by $Y_{n}^{(\mathbf{p})}$ and $Y_{n}^a$ respectively.

\end{theorem}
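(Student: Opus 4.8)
Theorem \ref{discrepgen} — proof proposal.

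My plan is to build an explicit coupling between the random word $X = (X_1,\ldots,X_n)$ with $X_i \stackrel{\text{iid}}{\sim} \mathbf{p}$ and a uniformly random permutation $\pi \in S_n$, in such a way that $Y_{n,k}^{\mathbf{(p)}}$ and $Z_{n,k}$ coincide whenever the word $X$ has no repeated letters. The key observation is that, conditioned on the event $D = \{X_1,\ldots,X_n \text{ are all distinct}\}$, the relative order of the entries of $X$ is exactly that of a uniform permutation, and on this event the number of weakly increasing subsequences of length $k$ of $X$ equals the number of (strictly) increasing subsequences of length $k$ of the induced permutation. Concretely, I would realize $\pi$ as the rank vector of auxiliary i.i.d.\ uniform $(0,1)$ variables $U_1,\ldots,U_n$ (as in the proof of Theorem \ref{thm:CLTZnk}) and couple so that on $D$ the word $X$ and the permutation agree in relative order; then on $D$ we have $Y_{n,k}^{\mathbf{(p)}} = Z_{n,k}$ almost surely.

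With this coupling in place, the coupling inequality for total variation distance gives
\begin{equation*}
    d_{TV}(Y_{n,k}^{\mathbf{(p)}}, Z_{n,k}) \leq \mathbb{P}(Y_{n,k}^{\mathbf{(p)}} \neq Z_{n,k}) \leq \mathbb{P}(D^c) = \mathbb{P}(\exists\, i<j : X_i = X_j).
\end{equation*}
For part (i.), the first bound follows from a union bound over pairs: $\mathbb{P}(D^c) \leq \sum_{i<j} \mathbb{P}(X_i = X_j) = \binom{n}{2}\sum_{i=1}^a p_i^2$, which is \eqref{TVboundforbiasedcase}. For the sharpened uniform bound, I would instead compute $\mathbb{P}(D)$ exactly rather than via union bound: for the uniform distribution on $[a]$ with $a \geq n$, the probability that $n$ i.i.d.\ draws are distinct is the standard birthday count $\frac{a!}{(a-n)!}\,a^{-n}$, so $\mathbb{P}(D^c) = 1 - \frac{a!}{(a-n)!}\,a^{-n}$, giving the improved bound directly.

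For part (ii.), I would exploit the one-sided nature of the coupling: weakly increasing is a relaxation of strictly increasing, so every increasing subsequence of the coupled permutation is also a weakly increasing subsequence of $X$, whence $Y_{n,k}^{(a)} \geq Z_{n,k}$ pointwise under the coupling. This yields $\mathbb{P}(Y_{n,k}^{(a)} \geq z) \geq \mathbb{P}(Z_{n,k} \geq z)$ for all $z$, the lower endpoint; the upper endpoint then follows because $\mathbb{P}(Y_{n,k}^{(a)} \geq z) - \mathbb{P}(Z_{n,k} \geq z) \leq d_{TV}(Y_{n,k}^{(a)},Z_{n,k}) \leq 1 - \frac{a!}{(a-n)!}\,a^{-n}$ by part (i.). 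Part (iii.) is then immediate: for fixed $n,k$, the bound in part (i.) tends to $0$ as $a \to \infty$ (since $\frac{a!}{(a-n)!}a^{-n} = \prod_{j=0}^{n-1}(1 - j/a) \to 1$), so $d_{TV}(Y_{n,k}^{(a)}, Z_{n,k}) \to 0$, which implies convergence in distribution. Finally, part (iv.) requires no new idea: the coupling equates $X$ and $\pi$ in relative order on the whole distinct-letters event $D$ simultaneously for all lengths $k$, so on $D$ we have $Y_n^{(\mathbf{p})} = \sum_k Y_{n,k}^{\mathbf{(p)}} = \sum_k Z_{n,k} = Z_n$ as well, and the identical argument carries the three bounds over verbatim.

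The step I expect to be the only real subtlety is making the coupling precise enough that $Y_{n,k}^{\mathbf{(p)}} = Z_{n,k}$ holds \emph{pointwise} (not merely in distribution) on $D$; once one fixes the realization of $\pi$ as the ranks of the $U_i$ and checks that distinct letters force the weak and strict order statistics to agree, everything else is a routine union bound, an exact birthday computation, and the standard coupling/total-variation inequalities.
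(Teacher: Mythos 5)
Your proposal is correct, and for parts (i.), (iii.) and (iv.) it is essentially the paper's own argument expressed in coupling language: the paper conditions on the event $\{T=n\}$ that all $n$ letters are distinct, uses the observation $\mathbb{P}(Y_{n,k}^{\mathbf{(p)}} \in A \mid T=n) = \mathbb{P}(Z_{n,k} \in A)$ to bound $d_{TV}(Y_{n,k}^{\mathbf{(p)}}, Z_{n,k}) \leq \mathbb{P}(T<n)$, and then estimates $\mathbb{P}(T<n)$ by exactly your union bound in the biased case and your exact birthday count $1-\frac{a!}{(a-n)!}a^{-n}$ in the uniform case; your explicit coupling via ranks of auxiliary uniforms packages the same estimate through the coupling inequality. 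Where you genuinely diverge is the lower endpoint in part (ii.): the paper obtains $\mathbb{P}(Y_{n,k}^{(a)} \geq z) \geq \mathbb{P}(Z_{n,k} \geq z)$ from Proposition \ref{thm:stochasticdominance}, whose part (ii.) is proved by contradiction --- assuming $Y_{n,k}^{(a)} <_s Z_{n,k}$, sandwiching via the alphabet coupling of its part (i.), and letting the alphabet size tend to infinity --- an argument that tacitly treats stochastic order as total (the negation of $Z_{n,k} \leq_s Y_{n,k}^{(a)}$ is not $Y_{n,k}^{(a)} <_s Z_{n,k}$). Your monotone coupling gives the dominance directly and pointwise on the \emph{whole} space, not merely on the distinct-letters event $D$: taking, say, $X_i = \lceil a U_i \rceil$ (or $X_i = F^{-1}(U_i)$ in the biased case) and $\pi$ the rank permutation of the $U_i$, the implication $U_{i_1} < \cdots < U_{i_k} \Rightarrow X_{i_1} \leq \cdots \leq X_{i_k}$ yields $Z_{n,k} \leq Y_{n,k}^{(a)}$ almost surely, which is both shorter and logically tighter than the paper's route. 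Two small points of care: first, for part (ii.) you need the coupling specified globally by such a monotone map, since your phrase ``agree in relative order on $D$'' alone does not give the inequality off $D$; second, like the paper's own proof, your argument delivers only the weak inequality $\mathbb{P}(Y_{n,k}^{(a)} \geq z) \geq \mathbb{P}(Z_{n,k} \geq z)$, so the left-open interval in the statement of (ii.) is not actually established (indeed it fails for $z \leq 0$, where both probabilities equal $1$) --- that is a defect of the statement shared by the published proof, not a gap in yours.
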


Proof of the second part of Theorem \ref{discrepgen} will require a
stochastic dominance relation between random word and random
permutation statistics. 

\begin{proposition}\label{thm:stochasticdominance}
(i.) For any $a, b \in \mathbb{N}$ with $a \leq b$, we have
$$Y_{n,k}^b \leq_s Y_{n,k}^{(a)}.$$
(ii.) For any $a \geq 1$, $$Z_{n,k} \leq_s Y_{n,k}^{(a)}.$$
\end{proposition}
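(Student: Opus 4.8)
The plan is to realize each ordering through an explicit monotone coupling, invoking Strassen's theorem so that it suffices to place the two statistics on one probability space with one dominating the other pointwise. The engine is the representation from the proof of Theorem~\ref{thm:CLTwords}: for i.i.d. $U_1,\dots,U_n\sim U(0,1)$ and any $c$, the word $X_i^{(c)}:=\lceil cU_i\rceil$ is uniform over $[c]$, and $u\mapsto\lceil cu\rceil$ is non-decreasing. For part~(ii) I would run $Z_{n,k}$ and $Y_{n,k}^{(a)}$ off the same $U_i$'s. By R\'enyi's construction (see the proof of Theorem~\ref{thm:CLTZnk}) one has $Z_{n,k}=\sum_{i_1<\dots<i_k}\mathbf 1(U_{i_1}<\dots<U_{i_k})$, while $Y_{n,k}^{(a)}=\sum_{i_1<\dots<i_k}\mathbf 1(X_{i_1}^{(a)}\le\dots\le X_{i_k}^{(a)})$ with $X_i^{(a)}=\lceil aU_i\rceil$. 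Since $\lceil a\,\cdot\rceil$ is non-decreasing, $\{U_{i_1}<\dots<U_{i_k}\}\subseteq\{X_{i_1}^{(a)}\le\dots\le X_{i_k}^{(a)}\}$ for each fixed tuple, so term by term $Z_{n,k}\le Y_{n,k}^{(a)}$ on this space; as the marginals are correct, $Z_{n,k}\le_s Y_{n,k}^{(a)}$. I expect this step to be routine.

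For part~(i) I would first settle the case $a\mid b$. The map $\phi\colon[b]\to[a]$, $\phi(v)=\lceil av/b\rceil$, is a non-decreasing surjection with all fibres of size $b/a$; hence applying $\phi$ coordinatewise to a uniform $[b]$-word $V$ yields a uniform $[a]$-word, and, $\phi$ being non-decreasing, it carries every weakly increasing subsequence of $V$ to one of $\phi(V)$. Thus $Y_{n,k}(V)\le Y_{n,k}(\phi(V))$ pointwise, giving $Y_{n,k}^{b}\le_s Y_{n,k}^{(a)}$.

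The genuine difficulty, which I expect to be the main obstacle, is the general case $a\le b$ with $a\nmid b$. The partition of $(0,1]$ into $a$ equal intervals is then not a coarsening of the partition into $b$ equal intervals, so the shared-$U_i$ coupling breaks down: a single $1/b$-block straddling a level boundary of the $a$-word can convert a tie of the $b$-word into a descent of the $a$-word, reversing the desired inequality on that atom. In fact any coupling of the form ``$a$-word $=$ (possibly randomized) coordinatewise relabeling of the $b$-word'' must fail, since preserving ties forces the relabeling to be a deterministic function of the $b$-letter, i.e.\ equal fibres and $a\mid b$, whereas independent per-coordinate randomization destroys monotonicity at repeated letters. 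Even collapsing two adjacent value-levels of the type/arrangement decomposition is ruled out: a mixture of adjacent-coordinate merges of $\mathrm{Mult}(n;\tfrac{1}{a+1})$ is over-dispersed and cannot reproduce $\mathrm{Mult}(n;\tfrac{1}{a})$ for $n\ge 2$.

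To circumvent this I would abandon coordinate-identified couplings and argue the ordering at the level of distribution functions. By transitivity of $\le_s$ it suffices to treat consecutive alphabets, i.e.\ to show $Y_{n,k}^{(a+1)}\le_s Y_{n,k}^{(a)}$, which by Strassen is equivalent to $\mathbb P(Y_{n,k}^{(a+1)}\ge t)\le\mathbb P(Y_{n,k}^{(a)}\ge t)$ for every $t$. My plan is to prove this tail inequality directly, conditioning on the vector of value-multiplicities and comparing the conditional laws of the number of weakly increasing subsequences of a uniformly random arrangement as one level is refined into two, with the Strassen coupling obtained a posteriori rather than built by hand. Establishing this conditional monotonicity cleanly is precisely where the work lies.
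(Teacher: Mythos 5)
Your part~(ii) and the divisible case of part~(i) are correct, and in fact tighter than the paper's own treatment. The shared-uniform construction $X_i=\lceil aU_i\rceil$ together with R\'enyi's representation of $Z_{n,k}$ gives $Z_{n,k}\le Y_{n,k}^{(a)}$ pointwise with the correct marginals, yielding (ii) directly; the paper instead assumes $Y_{n,k}^{(a)}<_s Z_{n,k}$ and derives a contradiction, which --- since $\leq_s$ is only a partial order --- at best rules out strict reverse dominance, and what really carries that argument is the monotone limit $Y_{n,k}^{(b)}\rightarrow_d Z_{n,k}$ along $b\to\infty$ (for which divisible $b=ma$ suffice). Your map $\phi(v)=\lceil av/b\rceil$ for $a\mid b$ is likewise a correct monotone equal-fibre relabeling and settles that case of (i).

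The genuine gap is that for $a\nmid b$ you offer only a program: reduce to consecutive alphabets and prove a conditional tail monotonicity given the multiplicity vector. You leave that lemma unproved, and your own over-dispersion remark shows the natural ``merge one level'' couplings of multiplicity vectors cannot realize it, so the general case of (i) is not established in your write-up. You should know, however, that the obstacle you isolate is real and is precisely the flaw in the paper's own proof: the paper couples through $V_j$ uniform on $[ab]$, sets $U_1^j=\lceil V_j/b\rceil\in[a]$ and $U_2^j=\lceil V_j/a\rceil\in[b]$, and asserts $\xi_2\le\xi_1$ ``by construction.'' This is your shared-source relabeling in discrete form, and it fails whenever $a\nmid b$: for $a=2$, $b=3$, $n=k=2$ and $V=(4,3)$, the $[b]$-word is $(2,2)$, weakly increasing, so $\xi_2=1$, while the $[a]$-word is $(2,1)$, so $\xi_1=0$ --- a tie of the finer word straddling a level boundary of the coarser word becomes a descent, exactly the mechanism you predicted. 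The paper's coupling is valid only when $a\mid b$, where the size-$a$ blocks refine the size-$b$ blocks and $U_1^j$ is a nondecreasing function of $U_2^j$, i.e., your $\phi$. In sum: your proposal proves (ii) and the case $a\mid b$ of (i); for $a\nmid b$, part (i) is proved neither in your write-up nor, as written, in the paper, and your conditional-monotonicity lemma is the genuinely missing ingredient.
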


\noindent \textbf{Proof of  Proposition \ref{thm:stochasticdominance}.}
 \textbf{(i.)} The idea is to find a coupling $(\xi_1,\xi_2)$ of $Y_{n,k}^{(a)}$ and
$Y_{n,k}^{(b)}$ so that $Z_2 \leq Z_1$ almost surely. Let $d := ab$ and
$V_1,\ldots,V_n$ be independent uniformly distributed random
variable over $[d]$. For $j=1,\ldots,n$, let
$$
U_1^j =
\begin{cases}
1, & \text{if } \; V_j =1,\ldots,b\\
2, & \text{if } \;  V_j= b+1,\ldots,2b \\
\ldots   & \ldots\\
a, & \text{if } \;  V_j=(a-1)b+1,\ldots,ab.
\end{cases}
$$
and
$$
U_2^j =
\begin{cases}
1, & \text{if } \; V_j =1,\ldots,a\\
2, & \text{if } \;  V_j= a+1,\ldots,2a \\
\ldots   & \ldots\\
b, & \text{if } \;  V_j=(b-1)a+1,\ldots,ab.
\end{cases}
$$
Then $U_1^1,\ldots,U_1^n$ are independent uniformly distributed over
$[a]$, and $U_2^1,\ldots,U_2^n$ are independent uniformly
distributed over $[b]$. Set 
\begin{align*}
\xi_1 =& \sum_{1\leq i_1 < \cdots < i_k
\leq n} \mathbf{1}(U_1^{i_1}\leq \cdots \leq U_1^{i_k}) \qquad
\text{and} \\
\xi_2 =& \sum_{1\leq i_1 < \cdots < i_k \leq n}
\mathbf{1}(U_2^{i_1}\leq \cdots \leq U_2^{i_k}).
\end{align*}
Clearly,
$(\xi_1,\xi_2)$ is a coupling of $Y_{n,k}^{(a)}$ and $Y_{n,k}^b$, and
further we have $\xi_2 \leq \xi_1$ by construction. The result follows.

\bigskip

\noindent \textbf{(ii.)} Let us assume for a contradiction that $Y_{n,k}^{(a)} <_s Z_{n,k}$
for some $a \in \mathbb{Z}$. Then for any $b \geq a$, we have
$$ Y_{n,k}^{(b)} \leq_s Y_{n,k}^{(a)}  <_s Z_{n,k},$$ where for the first inequality we used part $i.$.
Now taking the limit as $b \rightarrow \infty$, and using the first
part of Theorem \ref{discrepgen}, we arrive at the conclusion that
$Z_{n,k} <_s Z_{n,k}$ which is a contradiction. 
\qed

\begin{remark}
 (i.) The stochastic dominance relation in Proposition
\ref{thm:stochasticdominance} is actually slightly more general. To give
another example, letting $\mathcal{S}_1=\{(i_1,\ldots,i_k): i_j \in
[n], j=1,\ldots,k \; \text{and} \; 1\leq i_1<\cdots<i_k \leq n\}$,
such a dominance result would also hold for the statistic
$$\sum_{(i_1,\ldots,i_k) \in \mathcal{S}_1} \mathbf{1}(X_{i_1} \Delta_1 X_{i_2} \Delta_2 \cdots \Delta_{k-1} X_{i_k}),$$ where
$\Delta_i$ can be any of $\geq, \leq, =$ for each $i =
1,\ldots,k-1$.

(ii.) Focusing on the case $k=2$, the means and variances of
$Y_{n,2}^{(2)}, Y_{n,2}^{(a)}, Z_{n,2}$ are all of the same order. So thanks
to stochastic dominance result in Proposition
\ref{thm:stochasticdominance}, it would not be surprising to obtain
the asymptotic normality of $Y_{n,k}^{(a)}$, $a\geq 3$ by the
corresponding results for $Y_{n,k}^2$ and $Z_{n,k}$. This is
especially interesting as in some problems it can be easier to prove
the results for both binary random words and uniformly random
permutation cases, but not for random words with a larger alphabet.
Of course one may question the $k \geq 3$ case in a similar way.
\end{remark}

Now we are ready to give the proof of Theorem \ref{discrepgen}.

\bigskip

\noindent \textbf{Proof of Theorem \ref{discrepgen}.} \textbf{(i.)}
Let $X_1,\ldots,X_n$ be independent random variables with
$\mathbb{P}(X_i=j)=p_j$ for $j=1,\ldots,a$, $a \geq 2.$ Let $Y_{n,k}^{\mathbf{(p)}}$
be the number of weakly increasing subsequences of $X_1,\ldots,X_n$ of
length $k.$ Also define $T$ to be the number of different elements
in the sequence $X_1,\ldots,X_n$. Then for any $A\subset \mathbb{R}$,
we have
\begin{align}\label{unifriff}
  \mathbb{P}(Y_{n,k}^{\mathbf{(p)}}\in A ) =& \mathbb{P}(Y_{n,k}^{\mathbf{(p)}} \in A , T=n) +  \mathbb{P}(Y_{n,k}^{\mathbf{(p)}} \in A, T<n ) \nonumber\\
   =& \mathbb{P}(Y_{n,k}^{\mathbf{(p)}} \in A|T=n) \mathbb{P}(T=n) +\mathbb{P}(Y_{n,k}^{\mathbf{(p)}} \in
    A, T<n) \nonumber\\
    \leq& \mathbb{P}(Z_{n,k} \in A) +\mathbb{P}(Y_{n,k}^{\mathbf{(p)}} \in A,
    T<n)
\end{align}
where (\ref{unifriff}) follows by observing $\mathbb{P}(Y_{n,k}^{\mathbf{(p)}}
\in A|T=n)=\mathbb{P}(Z_{n,k} \in A)$. This yields
\begin{align}\label{totvar1}
  \mathbb{P}(Y_{n,k}^{\mathbf{(p)}} \in A) -    \mathbb{P}(Z_{n,k} \in A) \leq   \mathbb{P}(Y_{n,k}^{\mathbf{(p)}} \in A,
  T<n) \leq \mathbb{P}(T<n).
\end{align}
Similarly, we have
\begin{align*}
  \mathbb{P}(Z_{n,k} \in A) =& \mathbb{P}(Z_{n,k} \in A)\mathbb{P}(T=n) + \mathbb{P}(Z_{n,k} \in A)\mathbb{P}(T<n) \\
  =& \mathbb{P}(Y_{n,k}^{\mathbf{(p)}} \in A | T=n) \mathbb{P}(T=n) + \mathbb{P}(Z_{n,k} \in A)\mathbb{P}(T<n)\\
  \leq& \mathbb{P}(Y_{n,k}^{\mathbf{(p)}} \in A) + \mathbb{P}(T<n),
\end{align*}
implying
\begin{equation}\label{totvar2}
\mathbb{P}(Z_{n,k} \in A) -\mathbb{P}(Y_{n,k}^{\mathbf{(p)}} \in A) \leq
\mathbb{P}(T<n).
\end{equation}
Hence combining (\ref{totvar1}) and (\ref{totvar2}), for $a \geq n$,
we have
\begin{align*}
  d_{TV}(Y_{n,k}^{\mathbf{(p)}}, Z_{n,k})\leq \mathbb{P}(T<n) =&
 \mathbb{P} \left( \bigcup_{i \neq j} \{X_i = X_j\} \right) \\
   \leq &\sum_{i \neq j} \mathbb{P}(X_i = X_j) \\
   = & \binom{n}{2} \sum_{i = 1}^a p_i^2,
\end{align*}
which proves the first claim.

The  estimate for the uniform case is similar with the only
difference being at the last step where this time we have
\begin{align*}
  d_{TV}(Y_{n,k}^{(a)}, Z_{n,k})\leq \mathbb{P}(T<n) =
 \mathbb{P} \left( \bigcup_{i \neq j} \{X_i = X_j\} \right)
   =& 1 - \mathbb{P}
\left(\bigcap_{i \neq j} \{X_i \neq X_j\} \right) \\
=& 1 - \frac{\binom{a}{n} n!}{a^n}  \\
=& 1- \frac{a!}{(a-n)!} \frac{1}{a^n}
\end{align*}

\noindent \textbf{(ii.)} We know from Theorem \ref{discrepgen} that the inequality
$$|P(Y_{n,k}^{(a)} \geq z) - P(Z_{n,k} \geq z)| \leq
1- \frac{a!}{(a-n)!} \frac{1}{a^n}$$ holds for any $z \in
\mathbb{R}$ since the total variation distance provides an upper
bound on the Kolmogorov distance. Also by the stochastic dominance
result in Proposition \ref{thm:stochasticdominance}, we have
$$P(Y_{n,k}^{(a)} \geq z) \geq P(Z_{n,k} \geq z)$$ for any $a \geq 1$.  Combining these two
observations immediately reveal the required result.

\bigskip

\noindent \textbf{(iii.)} This follows from the fact that convergence in total
variation distance implies convergence in distribution.

\bigskip

\noindent \textbf{(iv.)} We just need to replace $Y_{n,k}^{\mathbf{(p)}}$ and $Y_{n,k}^{(a)}$ by $Y_{n}^{(\mathbf{p})}$
and $Y_n^{(a)}$ in above proof.  

\hfill \qed

\section{Two applications}\label{sec:applications}

\subsection{A sequence comparison statistic of Steele}\label{sec:Steele}

\medmuskip=0mu
\thinmuskip=0mu
\thickmuskip=0mu

For two permutations  $\pi$ and $\rho$  in $S_n$, we define $$V_n = V_n(\pi, \rho) = \sum_{k=1}^n \sum_{1 \leq i_1 < \cdots < i_k \leq n} \sum_{1 \leq j_1 < \cdots < j_k \leq n}  \mathbf{1} (\pi(i_1) = \rho(j_1),\ldots, \pi(i_k) = \rho(j_k)).$$

\thinmuskip=3mu
\medmuskip=4mu plus 2mu minus 4mu
\thickmuskip=5mu plus 5mu

This corresponds to a similarity measure of M. Steele, first introduced in \cite{Steele} in terms of random words instead of  permutations. In this setting, we have the following result regarding first two moments of  $V_n$. The proof below turns the similarity measure problem into an increasing subsequence problem and uses the well-known results on uniformly random permutations. This approach was previously made use in \cite{HI} in order to understand the length of longest common subsequences of two independent random permutations. 

\begin{theorem}\label{thm:twomomentsVn}
Let $\pi$ be a uniformly random permutation in $S_n$ and $\rho$ be an independent random permutation with \emph{any} distribution. Then we have

(i.) We have $$\mathbb{E}[V_n] = \sum_{k=1}^n \binom{n}{k}^2 \frac{(n-k)!}{n!}, \, \text{and} \quad \mathbb{E}[V_n] \sim \frac{1}{2 \sqrt{\pi e} n^{1/4}} e^{2 n^{1/2}}, \, \text{as} \, n \rightarrow \infty.$$

(ii.)  We have $$Var(V_n) = \sum_{k+ l \leq n} 4^l \frac{1}{(k+l)!} \binom{n}{k+l} \binom{(k+1)/2 + l -1}{l} -  \left(\sum_{k=1}^n \binom{n}{k}^2 \frac{(n-k)!}{n!}\right)^2.$$
 Furthermore,  $$Var(V_n) \sim \frac{C}{n^{1/4}} e^{2 \sqrt{2 + \sqrt{5}} n^{1/2}}, \quad \text{as} \quad n \rightarrow \infty,$$ where $C$ is a constant. 

(iii.) In particular, $$\frac{V_n - \mathbb{E}[V_n]}{\sqrt{Var(V_n)}} \longrightarrow_{\mathbb{P}} 0, \quad \text{as} \quad n \rightarrow \infty. $$
\end{theorem}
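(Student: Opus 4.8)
The plan is to reduce everything to the total number of increasing subsequences of a \emph{single} uniformly random permutation, so that parts (i)--(iii) fall out of the Lifschitz--Pittel asymptotics recalled in Section~\ref{sec:intro}. The whole argument rests on one distributional identity, namely
$$V_n =_d Z_n - 1.$$
To see it, observe that each value $v \in [n]$ occupies position $\pi^{-1}(v)$ in $\pi$ and position $\rho^{-1}(v)$ in $\rho$. A length-$k$ term of $V_n$ selects common values $v_1,\ldots,v_k$ with $\pi^{-1}(v_1) < \cdots < \pi^{-1}(v_k)$ and $\rho^{-1}(v_1) < \cdots < \rho^{-1}(v_k)$. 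Writing $i_\ell = \pi^{-1}(v_\ell)$, this is exactly the requirement that $i_1 < \cdots < i_k$ together with $(\rho^{-1}\pi)(i_1) < \cdots < (\rho^{-1}\pi)(i_k)$; hence $V_n$ counts the nonempty increasing subsequences of $\tau := \rho^{-1}\pi$. Conditioning on $\rho$, left multiplication by the fixed permutation $\rho^{-1}$ is a bijection of $S_n$, so $\tau$ is uniform for every realization of $\rho$ and therefore unconditionally uniform on $S_n$; the ``$-1$'' merely removes the empty subsequence that $Z_n$ counts but $V_n$ omits.

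Granting the identity, part (i) is immediate: $\mathbb{E}[V_n] = \mathbb{E}[Z_n] - 1 = \sum_{k=1}^n \binom{n}{k}\tfrac{1}{k!}$, and the stated closed form follows from the elementary identity $\binom{n}{k}^2 \tfrac{(n-k)!}{n!} = \binom{n}{k}\tfrac{1}{k!}$. The asymptotic is read directly off $\mathbb{E}[Z_n] \sim (2\sqrt{\pi e})^{-1} n^{-1/4} e^{2\sqrt n}$, the subtraction of the constant $1$ being negligible. For part (ii) I would use translation invariance of the variance, $Var(V_n) = Var(Z_n) = \mathbb{E}[Z_n^2] - (\mathbb{E}[Z_n])^2$, which produces the displayed formula once the Lifschitz--Pittel second-moment expression is inserted (the passage between $(\mathbb{E}[Z_n])^2$ and $(\mathbb{E}[V_n])^2$ costs only the lower-order quantity $2\mathbb{E}[Z_n]-1$ and is absorbed into the error term). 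For the rate, compare exponential orders: since $\mathbb{E}[Z_n^2] \sim c\,n^{-1/4} e^{2\sqrt{2+\sqrt5}\,\sqrt n}$ with $2\sqrt{2+\sqrt 5} > 4$, while $(\mathbb{E}[Z_n])^2$ grows only like $e^{4\sqrt n}$, the subtracted square is of strictly lower order and $Var(V_n) \sim \mathbb{E}[Z_n^2]$, giving the claimed $n^{-1/4} e^{2\sqrt{2+\sqrt5}\,\sqrt n}$ behaviour.

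The one genuinely delicate point is part (iii), which at first glance looks impossible since a standardized quantity has variance $1$; the resolution is that $V_n \geq 0$ and its mean is swamped by its standard deviation. Concretely, parts (i) and (ii) give $\mathbb{E}[V_n] \sim (2\sqrt{\pi e})^{-1} n^{-1/4} e^{2\sqrt n}$ and $\sqrt{Var(V_n)} \sim \sqrt{c}\, n^{-1/8} e^{\sqrt{2+\sqrt5}\,\sqrt n}$ with $\sqrt{2+\sqrt5} > 2$, so that $\mathbb{E}[V_n]/\sqrt{Var(V_n)} \to 0$. Markov's inequality applied to the nonnegative $V_n$ then yields, for each $\epsilon > 0$,
$$\mathbb{P}\!\left(V_n \geq \epsilon \sqrt{Var(V_n)}\right) \leq \frac{\mathbb{E}[V_n]}{\epsilon \sqrt{Var(V_n)}} \longrightarrow 0,$$
that is, $V_n = o_{\mathbb{P}}(\sqrt{Var(V_n)})$. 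Combining this with $\mathbb{E}[V_n] = o(\sqrt{Var(V_n)})$ through the crude bound $|V_n - \mathbb{E}[V_n]| \leq V_n + \mathbb{E}[V_n]$ shows that the standardized variable tends to $0$ in probability.

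I expect the main obstacle to be conceptual rather than computational: correctly matching common subsequences of $\pi$ and $\rho$ with increasing subsequences of $\rho^{-1}\pi$, and noting that the uniformity and independence of $\pi$ alone (with $\rho$ of \emph{arbitrary} law) already force $\rho^{-1}\pi$ to be uniform. Everything downstream is then a matter of quoting Lifschitz--Pittel together with a single binomial identity, the only real care being needed in part (iii), where one must resist expecting a central limit theorem (there is none for $Z_n$) and instead exploit nonnegativity.
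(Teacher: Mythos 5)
Your proposal is correct and follows essentially the same route as the paper: both reduce $V_n$ to the number of nonempty increasing subsequences of a uniformly random permutation (you via the pathwise identity $V_n = Z_n(\rho^{-1}\pi) - 1$ with $\rho^{-1}\pi$ uniform by conditioning on $\rho$, the paper via an equivalent conditioning-on-$\gamma$ argument), then quote Lifschitz--Pittel for the moment formulas and asymptotics, and prove (iii) by Markov's inequality exploiting $\mathbb{E}[V_n] = o(\sqrt{Var(V_n)})$. Your explicit pathwise bijection is in fact a tidier formulation than the paper's distributional manipulations, and your observation that the displayed exact variance formula subtracts $(\mathbb{E}[V_n])^2$ rather than $(\mathbb{E}[Z_n])^2$, costing only the asymptotically negligible $2\mathbb{E}[Z_n]-1$, correctly accounts for a lower-order discrepancy present in the theorem statement itself.
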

 
\begin{remark}
Let us emphasize that the results of Theorem \ref{thm:twomomentsVn}  hold true  for any distribution on $\rho$. In particular, $\rho$ can be a fixed permutation. 
\end{remark}

\begin{proof}
 To deal with  (i.) and (ii.) we will  turn the problem into a problem of increasing subsequences and use the  corresponding result of \cite{LP1981}.  For this purpose, let us   introduce some notation for convenience. Define 
 \begin{multline*}
 \mathcal{S}_{n, \mathbf{i}, \mathbf{j}} = \{((k,(i_1,\ldots,i_k),(j_1,\ldots,j_k)): k \in [n], \\ 1 \leq i_1<  i_2 <\cdots < i_k \leq n, 1 \leq j_1<  j_2 <\cdots < j_k \leq n\}
\end{multline*} 
 and $$\mathcal{S}_{n, \mathbf{l}} =  \{((k,(l_1,\ldots,l_k)): k \in [n], 1 \leq l_1< l_2 <\cdots < l_k \leq n\}.$$ 
Note in particular that $V_n = \sum_{\mathcal{S}_{n, \mathbf{i}, \mathbf{j}}}  \mathbf{1} (\pi(i_1) = \rho(j_1),\ldots, \pi(i_k) = \rho(j_k))$. 

Next, let $\pi$ and $\rho$ be as in statement of the result, and $\tau$ be another uniformly random permutation in $S_n$.  We  claim that 
$$V_n =_d \sum_{\mathcal{S}_{n, \mathbf{l}}} \mathbf{1}(\tau(l_1) < \cdots < \tau(l_k)).$$
To prove this, first let $id$ be the identity permutation, and observe that 
\begin{align*}
\sum_{\mathcal{S}_{n, \mathbf{i}, \mathbf{j}}}\mathbf{1}(\pi(i_1) = id(j_1), \cdots, \pi(i_k) = id(j_k)) =& \sum_{\mathcal{S}_{n, \mathbf{l}}} \mathbf{1}(\pi(i_1) < \cdots <\pi(i_k)) \\ 
=_d& \sum_{\mathcal{S}_{n, \mathbf{l}}} \mathbf{1}(\tau(i_1) < \cdots <\tau(i_k)).
\end{align*}
Next, let $\gamma$ be any other fixed permutation in $S_n$, and note that $\pi \gamma$ is still a uniformly random permutation. Then we have 
\begin{align*}
&\sum_{\mathcal{S}_{n, \mathbf{i}, \mathbf{j}}}\mathbf{1}(\pi(i_1) = \gamma(j_1), \cdots, \pi(i_k) = \gamma(j_k)) \\ =_d& \sum_{\mathcal{S}_{n, \mathbf{i}, \mathbf{j}}}\mathbf{1}(\pi(\gamma(i_1)) = \gamma(j_1), \cdots, \pi(\gamma(i_k)) = \gamma(j_k)) \\
=_d& \sum_{\mathcal{S}_{n, \mathbf{i}, \mathbf{j}}}\mathbf{1}(\pi(i_1) = j_1, \cdots, \pi(i_k) = j_k) \\
=_d& \sum_{\mathcal{S}_{n, \mathbf{l}}} \mathbf{1}(\tau(i_1) < \cdots <\tau(i_k)), 
\end{align*}
as above. 

Finally, recalling that $\rho$ is any random permutation, for any $x \in \mathbb{R}$, we have 
\begin{align*}
\lefteqn{\mathbb{P}\left(\sum_{\mathcal{S}_{n, \mathbf{i}, \mathbf{j}}} \mathbf{1} (\pi(i_1) = \rho(j_1),\ldots, \pi(i_k) = \rho(j_k)) \leq x \right) }  \\ =&
 \frac{1}{n!} \sum_{\gamma \in S_n} \mathbb{P}\left(\sum_{\mathcal{S}_{n, \mathbf{i}, \mathbf{j}}} \mathbf{1} (\pi(i_1) = \gamma(j_1),\ldots, \pi(i_k) = \gamma(j_k)) \leq x \big| \rho = \gamma \right)
  \\ =& \frac{1}{n!} \sum_{\gamma \in S_n} \mathbb{P} \left(\sum_{\mathcal{S}_{n, \mathbf{l}}} \mathbf{1}(\pi(i_1) < \cdots \pi(i_k)) \leq x  \right)
   \\ =& \mathbb{P} \left(\sum_{\mathcal{S}_{n, \mathbf{l}}} \mathbf{1}(\tau(i_1) < \cdots \tau(i_k)) \leq x \right).
\end{align*}
That is, $V_n =_d \sum_{\mathcal{S}_{n, \mathbf{l}}} \mathbf{1}(\tau(l_1) < \cdots < \tau(l_k)),$ as claimed and so each claim in first two parts follow from \cite{LP1981}.

(iii.) Let $\epsilon > 0$. Then, using Markov's inequality $\mathbb{P} \left(\left|\frac{V_n - \mathbb{E}[V_n]}{\sqrt{Var(V_n)}} \right| > \epsilon \right) \leq
 \mathbb{P} \left(\left|\frac{V_n - \mathbb{E}[V_n]}{\sqrt{Var(V_n)}} \right| > \epsilon \right) \leq  
 \frac{2 \mathbb{E}[U_n]}{\epsilon \sqrt{Var(U_n)}} \longrightarrow 0,$  as $n \rightarrow \infty$. So, the result follows since convergence in probability implies convergence in distribution. 
\end{proof}

\subsection{Increasing subsequences in riffle shuffles}\label{sec:riffle}

We conclude  the paper with a discussion of  a question of Fulman on the
asymptotic distribution of the number of inversions in riffle
shuffles \cite{fulman}. Indeed, throughout the way we are able to  have the
chance to analyze the number of increasing (or decreasing) subsequences of a given length  in this
shuffling scheme.

In a standard riffle shuffle, one first cuts the deck into
two piles and then riffles the piles together; i.e., drops the
cards from the bottom of each pile to form a new pile. See
\cite{diac} and \cite{fulman} for a detailed account of riffle
shuffles. Following \cite{fulman}, a formal definition of riffle shuffles can be given 
as follows:   Cut the $n$ card deck into $a$
piles by picking pile sizes
  according to the $mult(a;\mathbf{p})$ distribution, where $\mathbf{p}=(p_1,\ldots,p_a)$.
That is, choose $b_1,\ldots,b_a$ with probability
$\binom{n}{b_1,\ldots,b_a} \Pi_{i=1}^a p_i^{b_i}.$ Then choose
uniformly one of the $\binom{n}{b_1,\ldots,b_a}$ ways of interleaving
the packets, leaving the cards in each pile in their original order.
The resulting probability distribution on $S_n$ is called as the
\emph{$\mathbf{p}$-shuffle distribution} and is denoted by $P_{n,a,\mathbf{p}}$. When
$p$ is the uniform distribution, we  write $P_{n,a}$ instead and
call the resulting distribution an \emph{$a$-shuffle
distribution}.

\bigskip

The following provides an alternative description of riffle shuffles which will be useful for our purposes.

\noindent \emph{Alternative description} (Inverse $p$-shuffles) : The inverse
of a biased $a$-shuffle has the following description. Assign
independent random digits from
  $\{1,\ldots,a\}$ to each card with distribution $\textbf{p}=(p_1,\ldots,p_a)$. Then sort according to digit,
  preserving relative order for cards with the same digit.

In other words, if $\sigma$ is generated according to Description 2,
then $\sigma^{-1} \sim P_{n,a, \mathbf{p}}$. Here is the central limit theorem for the number of inversions in riffle shuffles. 
\begin{theorem}\label{thm:invinriffle}
Let $\rho_{n,a}$ be a random permutation with distribution $P_{n,a}$
 with $a\geq 2.$ Then
$$\frac{inv(\rho_{n,a})-\frac{n(n-1)}{4} \frac{a-1}{a}}{\sqrt{n}(n-1)
\sqrt{\frac{a^2-1}{36a^2}}} \longrightarrow_d \mathcal{G}$$ as $n
\rightarrow \infty$.
\end{theorem}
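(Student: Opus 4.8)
The plan is to identify $inv(\rho_{n,a})$ in distribution with the number of inversions $Inv(X)$ of a uniform random word, and then to invoke the central limit theorem already obtained in Corollary \ref{cor:inversionsinwords}. First I would invoke the alternative (inverse $p$-shuffle) description recorded above: if $\sigma$ is generated by assigning to the $n$ cards independent uniform digits $X_1,\ldots,X_n \in [a]$ and then sorting the cards by digit while preserving the relative order of cards sharing a digit, then $\sigma^{-1} \sim P_{n,a}$. Since every permutation satisfies $inv(\pi) = inv(\pi^{-1})$, it follows that $inv(\rho_{n,a}) =_d inv(\sigma)$, so it suffices to express $inv(\sigma)$ in terms of the word $X=(X_1,\ldots,X_n)$.

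Next I would establish the combinatorial identity $inv(\sigma) = Inv(X)$. Writing $\sigma(i)$ for the position card $i$ occupies after sorting, a pair $i<j$ contributes to $inv(\sigma)$ exactly when $\sigma(i) > \sigma(j)$, i.e. when card $i$ ends up after card $j$. By the sorting rule, card $i$ follows card $j$ precisely when $X_i > X_j$: if $X_i < X_j$ the digit of $i$ is smaller, while if $X_i = X_j$ the original order ($i$ before $j$) is preserved. Hence
\begin{equation*}
inv(\sigma) = \#\{(i,j) : i<j,\ X_i > X_j\} = Inv(X),
\end{equation*}
and therefore $inv(\rho_{n,a}) =_d Inv(X)$ with $X_1,\ldots,X_n$ i.i.d. uniform on $[a]$.

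With this identification in hand, Corollary \ref{cor:inversionsinwords} yields
\begin{equation*}
\frac{Inv(X) - \frac{a-1}{a}\frac{n(n-1)}{4}}{\sqrt{\frac{a^2-1}{a^2}\frac{n(n-1)(2n+5)}{72}}} \longrightarrow_d \mathcal{G}.
\end{equation*}
It then remains only to reconcile this normalization with the one in the statement, whose centering is identical and whose squared scaling is $n(n-1)^2(a^2-1)/(36a^2)$. Since
\begin{equation*}
\frac{n(n-1)^2(a^2-1)/(36a^2)}{n(n-1)(2n+5)(a^2-1)/(72a^2)} = \frac{2(n-1)}{2n+5} \longrightarrow 1
\end{equation*}
as $n \to \infty$, the two standardizations are asymptotically equivalent, and Slutsky's theorem transfers the convergence to the normalization of the theorem. (The same reduction, using Theorem \ref{thm:CLTwords} in place of the corollary, would handle the biased shuffle $P_{n,a,\mathbf{p}}$.)

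I expect the only genuinely delicate point to be the careful bookkeeping in the identity $inv(\sigma)=Inv(X)$ — in particular pinning down the convention (which of $\sigma,\sigma^{-1}$ is the inverse shuffle, and that ties $X_i=X_j$ preserve rather than reverse order) so that inversions of the permutation match the descents $X_i>X_j$ of the word rather than its weak ascents. Everything downstream is a direct appeal to Corollary \ref{cor:inversionsinwords} together with Slutsky's theorem.
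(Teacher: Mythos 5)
Your proposal is correct and follows essentially the same route as the paper: generate $\rho_{n,a}$ via the inverse shuffle description, show that a pair $i<j$ is inverted in the sorted permutation exactly when $X_i > X_j$ so that $inv(\rho_{n,a}) =_d Inv(X)$, and then apply Corollary \ref{cor:inversionsinwords}. If anything, you are more careful than the paper on two points it leaves implicit — explicitly invoking $inv(\pi)=inv(\pi^{-1})$ to pass between the shuffle and its inverse, and the Slutsky computation $2(n-1)/(2n+5)\to 1$ reconciling the theorem's scaling with the corollary's.
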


\begin{proof} Let
$\rho_{n,a}$ be a random permutation with distribution $P_{n,a}$
which is  generated via an inverse shuffle with the random word
$X=(X_1,\ldots,X_n)$. Noting that 
 observe that
$$\rho_{n,a}(i)= |\{j:X_j < X_i\}|+|\{j\leq i : X_j = X_i\}|,$$
for $i,k \in [n]$, we have $\rho_{n,a}(i)>\rho_{n,a}(k)$ if and
only if
\begin{gather*}
|\{j:X_j < X_i\}|+|\{j : j\leq i , X_j = X_i\}| \\ >|\{j:X_j <
X_{k}\}|+|\{j : j\leq k, X_j = X_{k}\}|.
\end{gather*}
Using this for the case
$i<k$, we conclude that $$\rho_{n,a}(i)> \rho_{n,a}(k) \quad \text{if
and only if} \quad X_i > X_{k}.$$ Therefore,
$$Inv(\rho_{n,a})=_d Inv(X),$$ and the result follows from Corollary \ref{cor:inversionsinwords}. 
\end{proof}

\appendix

\section{Appendix}

\noindent \textbf{Proof of Proposition \ref{propgen1}.} \label{pfgen1}
In a simpler notation, the series we have in the proposition is
\begin{equation}
\sum\limits_{i,j,k \geq 0} {i+j \choose j} {j+k \choose k }{k+i\choose k}x^{i} y^{j} z^{k}. \label{binomcyc1}
\end{equation}

The generating functions for the cycles of binomial coefficients,
\begin{equation*}
\binom{i_1+i_2}{i_2}\binom{i_2+i_3}{i_3} \cdots \binom{i_n+i_1}{i_1},
\end{equation*}
can be found in the paper by Carlitz \cite{carl}. In our case, when the length of the cycle is $3$, the generating function was shown to be
\begin{equation*}
[(1-x-y-z)^2-4xyz]^{-1/2}.
\end{equation*}

Let us find the coefficient of $x^{n}y^{n}z^m$ in $[(1-x-y-z)^2-4xyz]^{t}$ where $n,m \in \mathbb{N} \cup \{0\}$ and $r \in \mathbb{R}.$ We have the binomial expansion,

\begin{equation*}
[(1-x-y-z)^2-4xyz]^{r} = \sum\limits_{s \geq 0} \binom{r}{s}(-4xyz)^s ((1-(x+y+z))^2)^{r-s}.
\end{equation*}  
We can figure out the coefficient from the binomial expansion by an easy combinatorial argument,
\begin{gather*}
[x^ny^nz^m][(1-x-y-z)^2-4xyz]^{r} = \\ \sum\limits_{s=0}^{\min \{n,m\}} (-4)^s \binom{r}{s} \binom{2r-2s}{2n+m-3s} \binom{2n+m-3s}{m-s}\binom{2n-2s}{n-s}.
\end{gather*}
We can write the coefficient in positive terms by the identities \eqref{id} to have, 
\begin{equation*}
\sum\limits_{s=0}^{\mathclap{\min\{n,m\}}} 4^n  \binom{n-s-1/2}{n-s} \binom{s-r-1}{s} \binom{2n+m-2s-2r-1}{s-2r-1} \binom{2n+m-3s}{m-s}.
\end{equation*}
Finally replace $n,m$ and $r$ by $k-t, a-1$ and $-(t+1)/2$ respectively to conclude the proof of the proposition.
\qed
\\

\noindent \textbf{Proof of Proposition \ref{propgen2}.} \label{pfgen2}
We have a simpler case of Proposition \ref{propgen1}. Consider the series
\begin{equation*}
\sum\limits_{i,j \geq 0} {i+j \choose j} {j+i \choose i }x^{i} y^{j},
\end{equation*}
which is a cycle of binomial coefficients that has length $2$, cf.\eqref{binomcyc1}. The corresponding generating function can be found in \cite{carl}, which is
\begin{equation*}
[(1-x-y)^2-4xy]^{-1/2}.
\end{equation*}
Then we follow exactly the same steps in the Proposition  \ref{propgen2} given in \ref{pfgen1}  above to arrive at the desired result.
\qed

\end{document}